\theoremstyle{plain}
\newtheorem{theorem}{Theorem}
\newtheorem{proposition}[theorem]{Proposition}
\newtheorem{lemma}[theorem]{Lemma}
\theoremstyle{definition}
\newtheorem{remark}[theorem]{Remark}
\newcommand{\PP}{{\mathbb P}}
\newcommand{\CC}{{\mathbb C}}
\newcommand{\RR}{{\mathbb R}}
\newcommand{\NN}{{\mathbb N}}
\newcommand{\sS}{{\mathcal S}}
\newcommand{\sF}{{\mathcal F}}
\newcommand{\sL}{{\mathcal L}}
\newcommand{\rank}{{\rm rank}}
\title[Hilbert's SOS cones]{Algebraic boundaries of Hilbert's SOS cones}
\author[G.~Blekherman, J.~Hauenstein, J.C.~Ottem, K.~Ranestad, B.~Sturmfels]{
Grigoriy Blekherman, Jonathan Hauenstein, \\ John Christian Ottem,
Kristian Ranestad \\ and Bernd Sturmfels}
\subjclass[2010]{14J,14P,14Q}
\keywords{Positive polynomials, K3 surfaces}
\begin{document}

\begin{abstract}
We study the geometry underlying the difference between non-negative
polynomials and sums of squares.
The hypersurfaces that discriminate these two cones for
ternary sextics and quaternary quartics
are shown to be Noether-Lefschetz loci of K3 surfaces.
The projective duals of these hypersurfaces
are defined by rank constraints on Hankel matrices.
We compute their degrees using
numerical algebraic geometry, thereby verifying results
 due to Maulik and Pandharipande. The non-SOS
extreme rays of the two cones of non-negative forms
are parametrized respectively by the  Severi variety of plane rational sextics
and by the variety of quartic symmetroids.
\end{abstract}

\maketitle

\section{Introduction}

A fundamental object in convex algebraic geometry is the
cone $\Sigma_{n,2d}$ of homogeneous polynomials
of degree $2d$ in $\RR[x_1,\ldots,x_n]$ that are sums of squares (SOS).
Hilbert \cite{Hil} showed that  the cones
$\Sigma_{3,6}$ and $\Sigma_{4,4}$ are strictly contained in the
corresponding cones $P_{3,6}$ and $P_{4,4}$ of non-negative polynomials.
Blekherman \cite{Ble}  furnished a geometric
explanation for this containment.
In spite of his recent progress, the geometry of the sets
$P_{3,6} \backslash \Sigma_{3,6}$ and
$P_{4,4} \backslash \Sigma_{4,4}$ remains mysterious.

 We here extend known results
on Hilbert's SOS cones by characterizing their
{\em algebraic boundaries}, that is, the hypersurfaces
that arise as Zariski closures of their topological boundaries.
The algebraic boundary of the cone $P_{n,2d}$ of non-negative polynomials
is the {\em discriminant} \cite{Nie}, and this is also always one
component in the algebraic boundary of $\Sigma_{n,2d}$.
The discriminant has degree  $n(2d-1)^{n-1}$, which equals
$75$ for $\Sigma_{3,6}$ and $108$ for $\Sigma_{4,4}$.
What we are interested in are the other components
in the algebraic boundary of the SOS cones.

\begin{theorem} \label{thm:first}
The algebraic boundary of $\Sigma_{3,6}$ has a unique non-discriminant
component. It has degree $83200$ and consists of forms that are sums of three squares of cubics.
Similarly, the algebraic boundary of $\Sigma_{4,4}$ has a unique non-discriminant
component. It has degree $38475$ and consists of forms that are sums of four squares of quadrics.
Both hypersurfaces define
Noether-Lefschetz divisors in moduli spaces of K3 surfaces.
\end{theorem}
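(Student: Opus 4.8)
The plan is to treat each SOS cone together with its dual and to exhibit its non-discriminant boundary component explicitly. Let $V$ denote the Zariski closure of the set of sextics $q_1^2+q_2^2+q_3^2$ with $q_i\in\RR[x,y,z]_3$, respectively of quartics $q_1^2+q_2^2+q_3^2+q_4^2$ with $q_i$ quaternary quadrics, and put $k=3$ in the first case and $k=4$ in the second. First I would show $V$ is an irreducible hypersurface. The incidence variety of pairs $(f,G)$ with $G$ a symmetric $10\times 10$ Gram matrix of $f$ of rank at most $k$ maps isomorphically to the locus of symmetric matrices of rank $\le k$, of dimension $k\cdot 10-\binom k2$, equal to $27$ for ternary sextics and $34$ for quaternary quartics --- exactly one less than the dimension of the ambient space of forms; since it also maps onto $V$, it suffices that this map be generically finite, i.e.\ that a general sum of $k$ squares have only finitely many representations with a $k$-dimensional span, which is a semicontinuous condition and can be checked on one example. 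Dually, a general $k$-dimensional space $W$ of cubics (resp.\ quadrics) is a complete intersection, so $\dim(S/(W))_6=1$ (resp.\ $\dim(S/(W))_4=1$), whence there is a unique form $\ell_W$ whose apolar ideal contains $W$; its catalecticant (Hankel) matrix has rank $10-k$, and the map $W\mapsto\ell_W$ identifies $V^\vee$ with the locus of forms whose catalecticant has rank at most $7$ (resp.\ at most $6$). The dimension $\dim\mathrm{Gr}(k,10)=21$ (resp.\ $24$) of this dual matches the expected codimension of the symmetric rank constraint, and $V$ is the variety of sums of $k$ squares.

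Next I would show $V$ is the unique non-discriminant component of the algebraic boundary. Stratify $\partial\Sigma^*=\{\ell:H_\ell\succeq 0,\ \det H_\ell=0\}$ by the corank $c$ of the catalecticant $H_\ell$, with kernel $W=\ker H_\ell$. For any $f=\sum q_j^2\in\Sigma$ one has $\ell(f)=\sum_j\langle H_\ell q_j,q_j\rangle\ge 0$ with equality iff every $q_j$ lies in $W$, so the exposed face $\{f\in\Sigma:\ell(f)=0\}$ is the cone of sums of squares of forms in $W$, and $\partial\Sigma$ is the union of these faces. The rank-one functionals are the point evaluations, their kernels are the forms vanishing at a point, and the union of the corresponding faces is the locus of forms with a real singular point: this gives the discriminant component. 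The faces with $\dim W=1$ or $2$ consist of single squares and products of two forms, hence of singular hypersurfaces contained in the discriminant; and for the remaining coranks one computes the dimension of each stratum from the Hilbert function of a generic ideal of $c$ cubics (resp.\ quadrics) --- all small cases settled by Fr\"oberg's theorem or by a direct computation. The union of the faces with $\dim W=c$ then has dimension $\dim\{\,c\text{-dimensional }W\text{ admitting an apolar form}\,\}+\binom{c+1}{2}$, which equals $27$ exactly when $c=3$ for ternary sextics and $34$ exactly when $c=4$ for quaternary quartics, and is strictly smaller for every other $c$. Moreover a general sum of three squares of cubics cuts out a smooth plane sextic (via the base-point-free net it defines), so this component lies outside the discriminant, whereas a general sum of three squares of quadrics acquires eight nodes at the base points of the net and hence lies in the discriminant; for the remaining case, a general sum of four squares of quadrics defines a smooth quartic surface. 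Exhibiting positive-semidefinite catalecticants of the required rank --- taking the catalecticant of a sum of seven sixth powers (resp.\ six fourth powers) of general real linear forms, whose kernels sweep out a full-dimensional subset of the Grassmannian --- shows that the relevant stratum genuinely meets the topological boundary, so $V$ is a component. Being irreducible of codimension one, it is the only non-discriminant one, and $V^\vee$ is the stated Hankel rank locus.

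The degrees $83200$ and $38475$ I would compute by numerical algebraic geometry: present $V$ either through the parametrization $(q_1,\dots,q_k)\mapsto\sum q_i^2$, slicing the positive-dimensional fibres by a generic affine space, or through the polynomial system for $V^\vee$ given by the catalecticant rank drop; intersect with a general line in $\PP^{27}$ (resp.\ $\PP^{34}$) to form a witness set, and count the witness points by homotopy continuation with monodromy and regeneration, certifying completeness with the trace test. The resulting numbers $83200$ for $\Sigma_{3,6}$ and $38475$ for $\Sigma_{4,4}$ agree with the degrees of Noether--Lefschetz divisors obtained by Maulik and Pandharipande via modular forms and Gromov--Witten theory.

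For the K3 statement: a general smooth plane sextic is the branch curve of a degree-two polarized K3 surface $X_f\to\PP^2$, and a general smooth quartic surface in $\PP^3$ is itself a degree-four polarized K3; in either case, as $f$ varies over all forms one dominates the nineteen-dimensional moduli space of such K3 surfaces, and the $\mathrm{PGL}$-invariant hypersurface $V$ maps onto a divisor $D$ there. When $f=\sum_iq_i^2\in V$, the factorization $(w-q_3)(w+q_3)=(q_1+\sqrt{-1}\,q_2)(q_1-\sqrt{-1}\,q_2)$ for sextics, respectively $(q_1+\sqrt{-1}\,q_2)(q_1-\sqrt{-1}\,q_2)=-(q_3+\sqrt{-1}\,q_4)(q_3-\sqrt{-1}\,q_4)$ for quartics, exhibits a pencil of curves on $X_f$, hence a second independent class in $\mathrm{Pic}(X_f)$; thus $D$ is contained in the Noether--Lefschetz locus, and the intersection pairing of the polarization with the new class picks out which irreducible Noether--Lefschetz divisor $D$ is, its degree being the number just computed. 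The main obstacle I anticipate is the combination of (i) nailing down the Hilbert-function and genericity inputs --- one must understand how the linear space of catalecticant matrices meets symmetric determinantal varieties, and verify full-dimensionality of the relevant real stratum --- and (ii) matching the projective degree of $V$ with the correct Maulik--Pandharipande Noether--Lefschetz number, which requires the precise dictionary between $\PP^{27}$ (resp.\ $\PP^{34}$) and the moduli space of K3 surfaces.
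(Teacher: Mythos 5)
Your overall architecture runs parallel to the paper's: the characterization of the non-discriminant boundary component as sums of $k$ squares comes from Blekherman's results (which you re-sketch via the Hankel/catalecticant stratification of $\partial\Sigma^*$), the dual description via rank constraints on the catalecticant matches the paper's Theorem~\ref{thm:fourth}, and the K3 interpretation via the extra pencil of curves on the double cover (resp.\ on the determinantal quartic surface) is the right geometric picture.

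The genuine gap is in how you establish the numbers $83200$ and $38475$. You propose to get them by homotopy continuation on a witness set and then observe that they ``agree with'' Maulik--Pandharipande. But a numerical degree count, even with a trace test, does not constitute a proof, and more importantly you have inverted the logical direction: the paper \emph{derives} the degrees from the Maulik--Pandharipande modular forms and then uses numerics only as a consistency check (Section~5). The derivation is precisely the step you flag as an ``obstacle'' at the end and do not carry out. Concretely, the missing work is: (i) identify the intersection lattice on the relevant K3 --- for the sextic case the bidegree-$(2,3)$ K3 in $\PP^1\times\PP^2$ has $A^2=2$, $A\cdot B=3$, $B^2=0$, discriminant $9$, so the exponent is $\delta=9/4$ in the modular form $\Theta^\Pi_2$; for the quartic case the determinantal quartic has $A^2=4$, $A\cdot B=4$, $B^2=0$, discriminant $16$, $\delta=2$ in $\Theta^\Pi_4$; (ii) read off the coefficients $332800$ and $76950$ from \cite[Theorem 2, Corollary 3]{MP}; and (iii) divide by the appropriate multiplicity --- $4$ in the sextic case because the test pencil $\Pi$ is a \emph{conic} in $\PP^{27}$ (a factor of $2$) and because each surface carries two curve classes $B$ and $3A-B$ with the given invariants (another factor of $2$), and $2$ in the quartic case for the two elliptic pencils. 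Without this bookkeeping the identification of $V$ with a specific Noether--Lefschetz divisor of the right degree is not established, and the theorem's numerical assertions remain unproved. Everything else in your proposal is either correct or a reasonable restatement of cited results, but this step is the core of the paper's argument and must be supplied.
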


Our characterization of these algebraic boundaries in terms of sums of few squares
is a consequence of \cite[Corollaries 5.3 and 6.5]{Ble}. What is new here is the
connection to K3 surfaces, which elucidates the
  hypersurface of ternary sextics
that are rank three quadrics in cubic forms, and the hypersurface of
quartic forms in $4$ variables that are rank four quadrics in quadratic forms.
Their degrees are coefficients in the modular forms derived by
Maulik and Pandharipande in their paper on
Gromov-Witten and Noether-Lefschetz theory  \cite{MP}.
In Section 2 we explain these concepts and present
the proof of Theorem~\ref{thm:first}.

Section 3 is concerned with the cone dual to $\Sigma_{n,2d}$ and with the
dual varieties to our Noether-Lefschetz hypersurfaces in Theorem \ref{thm:first}.
Each of them is a determinantal variety, defined by rank constraints on
 a $10 \times 10$-Hankel matrix, and it is parametrized by a Grassmannian
via the global residue map in \cite[\S 1.6]{CD}.
We note that Hankel matrices are also known as moment matrices or~ catalecticants.

Section 4 features another appearance of a
Gromov-Witten number \cite{KM} in convex algebraic geometry.
Building on work  of Reznick \cite{Rez}, we shall prove:

\begin{theorem} \label{thm:second}
The Zariski closure of the set of extreme rays of
$P_{3,6} \backslash \Sigma_{3,6}$ is the
Severi variety of rational sextic curves in the projective plane $\PP^2$.
This Severi variety has dimension $17$ and degree $26312976$ in
the $\PP^{27}$ of all sextic curves.
\end{theorem}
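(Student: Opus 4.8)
The plan is to match the generic extreme ray of $P_{3,6}\setminus\Sigma_{3,6}$ with a general rational plane sextic, building on the structure theory of Blekherman \cite{Ble} and Reznick \cite{Rez}, and then to extract the dimension and degree from classical enumerative geometry. Write $V\subset\PP^{27}$ for the Severi variety of rational plane sextics, i.e.\ the Zariski closure of the locus of irreducible sextics with ten nodes. An extreme ray of $P_{3,6}$ is spanned by a form $f\ge 0$ on the boundary of $P_{3,6}$, so $f$ has a finite nonempty real zero locus, at which it vanishes to order two because it is nonnegative, and extremality says that $f$ is, up to scaling, the only element of $P_{3,6}$ vanishing to order two there. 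By \cite{Ble,Rez} the generic such $f$ that is not a sum of squares has exactly ten real zeros $p_1,\dots,p_{10}$ imposing independent conditions on the ten-dimensional space of ternary cubics. I would then note that $V(f)\subset\PP^2_\CC$ is singular at each $p_i$; since a plane sextic has arithmetic genus $10$, the chain of inequalities $10\le\sum_i\delta_{p_i}\le 10-p_g(V(f))\le 10$ must consist of equalities, so $p_g(V(f))=0$, each $p_i$ is an ordinary node, and $V(f)$ has no further singularities. Finally $V(f)$ is irreducible, since a reducible nonnegative ternary sextic is necessarily a sum of squares. Hence $f\in V$, and therefore the Zariski closure of the set of extreme non-SOS rays is contained in $V$.

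For the opposite inclusion I would prove the converse: every nonnegative irreducible sextic $f$ whose real zero locus consists of exactly ten solitary nodes is an extreme non-SOS form. Extremality follows from B\'ezout: if a sextic $D$ not proportional to $f$ were singular at all ten nodes of $V(f)$, then $I_{p_i}(D,V(f))\ge 2\cdot 2=4$ at each node, giving $\sum_i I_{p_i}(D,V(f))\ge 40>36=6\cdot 6$, which is impossible. And $f$ is not a sum of squares because a positive semidefinite Gram matrix of $f$ on the space of cubics would have every evaluation vector $b(p_i)$ in its kernel, whereas these ten vectors span the whole ten-dimensional space --- the ten nodes of a rational sextic impose independent conditions on cubics, since the adjoint cubics of the sextic cut out the canonical series on the normalisation and that series is empty for a rational curve --- so such a Gram matrix would be zero.

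It remains to deduce equality of the two closures and to compute the invariants. The set of nonnegative sextics whose zero locus is exactly ten real solitary nodes is a Euclidean-open subset of the real points of $V$, and it is nonempty by the classical nonnegative non-SOS sextics of Robinson and Reznick \cite{Rez}, which have precisely ten real zeros; hence it has real dimension $\tfrac{6\cdot 9}{2}-10=17=\dim V$ and is Zariski dense in the irreducible variety $V$. Combined with the inclusion above, this gives that the Zariski closure of the extreme non-SOS rays equals $V$, of dimension $17$. The degree of $V$, a subvariety of $\PP^{27}$ of codimension $10$, is its intersection number with a general $\PP^{10}$, hence the number of rational plane sextics through $17$ general points; this is the genus-zero, degree-six Gromov--Witten invariant of $\PP^2$, which Kontsevich's recursion \cite{KM} evaluates to $26312976$.

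The main obstacle is the first input: extracting from \cite{Ble,Rez} the precise statement that the generic extreme ray of $P_{3,6}\setminus\Sigma_{3,6}$ is of the ``ten solitary real nodes'' type and that such forms fill a full-dimensional semialgebraic region of the real points of $V$, not a thin set. Once that is available, the genus count, the B\'ezout and adjunction arguments, and the Gromov--Witten evaluation are comparatively routine.
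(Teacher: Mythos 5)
Your overall architecture matches the paper's: show every extreme non-SOS form is an irreducible rational sextic with ten acnodes (forward inclusion into the Severi variety $\mathcal{S}_{6,0}$), show the set of such forms is $17$-dimensional (density), then read off degree $26312976$ from the Kontsevich--Manin recursion. Where you differ from the paper is in how you establish density. The paper invokes Reznick's explicit version of Hilbert's method (\cite[Theorem 4.1 and \S 5]{Rez}): starting from a particular eight-point configuration $\Gamma$, one gets a one-parameter family of extreme non-negative sextics, and perturbing $\Gamma$ freely yields a $16+1=17$-dimensional semi-algebraic family inside $\mathcal{S}_{6,0}$. You instead prove a clean stand-alone converse --- every irreducible non-negative sextic with exactly ten real solitary nodes is extreme (by B\'ezout, $\sum I_{p_i}\ge 40 > 36$) and non-SOS (a PSD Gram matrix on cubics would kill the ten evaluation vectors, which span because ten nodes of a rational sextic impose independent conditions on cubics) --- and then argue that this locus is Euclidean-open and nonempty in $\mathcal{S}_{6,0}(\RR)$. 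The converse you prove is a genuine addition: the paper never states it in this generality, relying instead on Reznick's construction to produce a family of extreme forms. Your B\'ezout and Gram-matrix arguments are correct and attractive.

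Two places in your write-up are thinner than they should be. First, in the genus chain $10\le\sum\delta_{p_i}\le 10-p_g\le 10$ you conclude ``each $p_i$ is an ordinary node,'' but $\delta_{p_i}=1$ only forces node or ordinary cusp; you need to invoke non-negativity (the $p_i$ are local minima, so no cuspidal branch) to rule out cusps, exactly as the paper's Remark 4.2 does when it shows the singularities are acnodes. Second, your density step --- ``the set of non-negative sextics whose zero locus is exactly ten real solitary nodes is a Euclidean-open subset of the real points of $V$'' --- is plausible but needs a couple of sentences: you must verify that a small perturbation of such a sextic inside the real Severi variety keeps the ten nodes real, distinct, and acnodal (openness of the positive-definite-Hessian condition at each node), and that the base point you use is a smooth point of $\mathcal{S}_{6,0}$ so that $\mathcal{S}_{6,0}(\RR)$ has full local dimension $17$ there. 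You also assert nonemptiness via ``the classical sextics of Robinson and Reznick''; one should check that the chosen example really is an irreducible rational sextic with precisely ten acnodes (the paper sidesteps this by quoting Reznick's Theorem 4.1 directly, which already produces a full-dimensional family). None of these is a fatal flaw, but as written the density half is less secure than the paper's, which trades your abstract openness argument for Reznick's explicit construction. Finally, your side-claim that ``a reducible non-negative ternary sextic is necessarily a sum of squares'' is true but not immediate in the degree-$(3,3)$ factorisation case; the paper cites \cite[Lemma 7.1]{Rez} for the irreducibility of extreme forms, which is the cleaner reference to lean on.
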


We also determine the analogous variety
of extreme rays for quartics in $\PP^3$:

\begin{theorem} \label{thm:third}
The Zariski closure of the set of extreme rays of
$P_{4,4} \backslash \Sigma_{4,4}$ is the
variety of {\em quartic symmetroids}  in $\PP^3$, that is,
the surfaces
whose defining polynomial is the  determinant of a
symmetric $4 \times 4$-matrix of linear forms.
This variety has dimension $24$ in
the $\PP^{34}$ of all quartic surfaces.
\end{theorem}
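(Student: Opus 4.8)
The plan is to prove that two subvarieties of $\PP^{34}$ coincide --- the Zariski closure of the set of extreme rays of $P_{4,4}\setminus\Sigma_{4,4}$, and the locus of quartic symmetroids --- by first computing the dimension of the symmetroid variety via its determinantal parametrization, and then setting up a birational correspondence between the two, mediated by the ten nodes. For the dimension, consider the map $M=\sum_{i=0}^{3}x_iM_i\mapsto\det M$ from the $40$-dimensional space of symmetric $4\times4$ matrices of linear forms to $\PP^{34}$. The group $GL_4$ acts by congruence $M\mapsto A^{\top}MA$, which scales $\det M$ by $(\det A)^2$, so the fibres of the map contain the $GL_4$-orbits; a short argument shows that for generic $M$ the simultaneous congruence stabilizer (the common solutions of $A^{\top}M_iA=M_i$, $i=0,\dots,3$) is finite, hence the generic orbit has dimension $16$. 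Therefore the symmetroid variety is irreducible of dimension $40-16=24$, as claimed; it is one of the $24$-dimensional families of $10$-nodal quartic surfaces, the nodes being the locus where $M$ has rank $\le2$, equivalently the common zeros of the $3\times3$ minors of $M$.

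Next I would recall, from the results building on \cite{Rez} together with \cite{Ble}, that a quartic $f$ spanning an extreme ray of $P_{4,4}\setminus\Sigma_{4,4}$ vanishes at exactly ten real points $\Gamma\subset\PP^3$. Because $f\ge0$ attains its minimum there, $f$ vanishes to order two at each point with positive definite transverse Hessian, so over $\CC$ the quartic surface $V(f)$ has ordinary nodes precisely at $\Gamma$; moreover $\Gamma$ imposes only nine conditions on the ten-dimensional space of quadratic forms. Conversely, a nonnegative quartic whose real zero locus consists of ten such solitary nodes, in general position otherwise, spans an extreme ray not contained in $\Sigma_{4,4}$. Thus, up to Zariski closure, the extreme rays of $P_{4,4}\setminus\Sigma_{4,4}$ are exactly the points $V(f)$ for such $f$, and the problem reduces to identifying these $10$-nodal quartics with the symmetroid variety.

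To make that identification I would invoke Catanese's theory of even sets of nodes and symmetric determinantal presentations: a $10$-nodal quartic surface is a symmetroid precisely when its nodes form an even set, and for quartics this happens exactly when the ten points impose only nine conditions on quadrics; conversely, the nodes of a general symmetroid are of this type. Together with the preceding paragraph this shows that the extreme-ray locus is Zariski-dense in the symmetroid variety. For the reverse inclusion one must check that a Zariski-dense --- indeed full-dimensional semialgebraic --- family of \emph{real} quartic symmetroids has all ten nodes real and solitary with no further real points, so that a nonnegative defining quartic exists and is automatically extreme and not a sum of squares; this is extracted from the classification of real topological types of quartic symmetroids, the setting of quartic spectrahedra. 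The irreducibility and dimension count of the first paragraph then force the two $24$-dimensional varieties to be equal.

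The main obstacle is this dictionary between the convex-geometric description (nonnegative, extreme, not a sum of squares, equivalently ten solitary real zeros lying on a quadric) and the algebro-geometric one (a symmetric $4\times4$ determinantal representation): one must align the genericity hypothesis in Catanese's theorem with the precise position of the zero set of an extreme form, and one must produce enough real symmetroids of the right topological type for Zariski density. Once these are established, the dimension computation pins the closure down to the full, irreducible, $24$-dimensional symmetroid variety rather than a proper subvariety.
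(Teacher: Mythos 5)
Your dimension count via the $GL_4$-congruence orbit is fine (and matches the paper's Jacobian computation, both of which cite Jessop for the theoretical justification), and the reduction of an extreme form to a real ten-nodal quartic via Proposition~\ref{thm CLR} together with a genus/irreducibility argument is also the route the paper takes. The genuine gap is the bridge you build to the symmetroid variety. You assert that the ten zeros $\Gamma$ of an extreme $F$ ``impose only nine conditions on the ten-dimensional space of quadratic forms,'' and that this is exactly what characterizes quartic symmetroids among ten-nodal quartics by Catanese's theory of even sets. Both halves of this are wrong. If a real quadric $Q$ passed through all ten zeros of an extreme $F$, then $F+Q^2$ would be nonnegative with the same real zero set, and maximality of $\mathcal V_\RR(F)$ (the same property you invoke) forces $Q^2$ to be a scalar multiple of $F$, making $F$ a square and hence a sum of squares --- a contradiction. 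So the ten nodes of an extreme form impose \emph{ten} independent conditions on quadrics; this is precisely the observation the paper makes and exploits (``This quadric is unique; otherwise there is a real quadric through all ten nodes and $F$ is not extreme''). Correspondingly, the ten nodes of a general quartic symmetroid do not lie on a quadric, so the criterion you attribute to Catanese cannot be the correct one.

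The characterization the paper actually uses is Jessop's (Lemma~\ref{Lemma:sym}): a ten-nodal quartic is a symmetroid when the sextic ramification curve obtained by projecting from a node splits into two cubics meeting in nine distinct points. Verifying this hypothesis for an extreme nonnegative quartic is where nearly all of the paper's work lies, and it is entirely absent from your sketch: one builds the pencil $F_t = F + tA^2$ where $A$ is the unique real quadric through nine of the ten nodes, shows the space $V$ of sextics singular at the eight projected nodes has dimension exactly $4$ via a Riemann--Roch computation on a blow-up, locates the unique $t_0$ where the ramification curve factors into cubics, argues by a real intersection-theoretic count that the two cubic factors must be complex conjugate, and finally deduces $t_0 = 0$ from the real topology of the family. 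None of this follows from an even-set criterion read off from the conditions on quadrics. Your density argument (``extracted from the classification of real topological types of quartic symmetroids'') is likewise a placeholder, whereas the paper exhibits density concretely by perturbing the ten nodes of the Choi--Lam--Reznick quartic~(\ref{eq:CLRquartic}). As written, the core identification of the extreme locus with the symmetroid variety is not established.
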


Section 5 offers an experimental study of the objects in this paper using
 numerical algebraic geometry. We demonstrate
that the degrees $83200$ and $38475$ in Theorem \ref{thm:first} can be found from scratch
using the software {\tt Bertini} \cite{Ber}.
This provides computational validation for the cited results by
Maulik and Pandharipande \cite{MP}.
Motivated by Theorem \ref{thm:third},
we also show how to compute
a symmetric determinantal representation (\ref{eq:symmetroid})
for a given quartic symmetroid.

\smallskip

A question one might ask is: {\em What's the point of integers such as~$38475$?}
One answer is that the exact determination of such degrees signifies an
understanding of deep geometric structures that can be applied to a wider
range of subsequent problems. A famous example is the number $3264$
of plane conics that are tangent to five given conics.
The finding of that particular integer in the 19th century led
to the development of  intersection theory in the 20th century, and
ultimately to numerical algebraic geometry in the 21st century.
To be more specific, our theorems above  furnish novel geometric representations
of boundary sums of squares that are strictly positive,
and of extremal non-negative polynomials that are not sums of squares.
Apart from its intrinsic appeal within algebraic geometry, we expect that our approach,
with its focus on explicit degrees,
will be useful for applications in optimization and~beyond.

\section{Noether-Lefschetz Loci of K3 Surfaces}

Every smooth quartic surface in $\PP^3$ is
a K3 surface. In our study of Hilbert's cone $\Sigma_{4,4}$
we care about  quartic surfaces
containing an elliptic curve of degree $4$.
As we shall see, these are the quartics that are sums of four squares.
K3 surfaces also arise as double covers of
$\PP^2$ ramified along a smooth sextic curve.
In our study of $\Sigma_{3,6}$ we care about  K3 surfaces
whose associated plane sextic is a sum of three squares.
This constraint on K3 surfaces also appeared
in the proof by Colliot-Th\'el\`ene \cite{CT} that  a general sextic in
$\Sigma_{3,6}$ is a sum of four but not three squares of rational functions.

Our point of departure is the Noether-Lefschetz Theorem (cf.~\cite{GH})
which states that a general quartic surface  $S$ in $\PP^3$ has Picard number $1$.
In particular, the classical result by Noether \cite{Noe}
and Lefschetz \cite{Lef} states that every irreducible curve on $S$
is the intersection of $S$ with another surface in $\PP^3$.
This has been extended to general polarized K3 surfaces, i.e.
K3 surfaces $S$ with  an ampel divisor $A$. 
For each even $l \geq 2$,
the moduli space $M_{l}$ of K3 surfaces with a polarization $A$ of degree
$A^{2}=l$ has dimension $19$ and is irreducible. For the general 
surface $S$ in $M_{l}$, the Picard group is generated by $A$.
The locus in the moduli space $M_{l}$ where
the Picard number of $S$ increases to $2$ has codimension one.
We are here interested in one irreducible component of that locus in
$M_2$, and also in  $M_4$.
The relevant enumerative geometry was developed only recently,
by Maulik and Pandharipande \cite{MP}, and our result rests  on theirs.

 \begin{proof}[Proof of Theorem \ref{thm:first}]
 It was shown in \cite{Ble} that   $\partial \Sigma_{3,6}\backslash \partial P_{3,6}$
  consists of ternary sextics $F$ that are sums of three squares over $\RR$.
Over the complex numbers $\CC$, such a sextic $F$ is a  rank three quadric in cubic
forms, so it can be written as
\[ \qquad \qquad
F \,\, = \,\, f h - g^2
\qquad \hbox{where} \,\,f,g,h \in \CC[x_1,x_2,x_3]_3.
\]
Let $S$ be the surface of bidegree $(2,3)$ in $ \PP^{1} {\times} \PP^{2}$
 defined by the polynomial
\[
G \,\, = \,\, f s^{2}+2g st+h t^{2}. \qquad
\]
If $f,g$ and $h$ are general, then the surface $S$ is smooth.
 The canonical divisor on  $\PP^{1}\times \PP^{2}$ has
bidegree $(-2,-3)$, so, by the adjunction formula, $S$ is a K3-surface.  The
projection $S\to \PP^{2}$ is two-to-one,
 ramified along the curve $\{F=0\}\subset \PP^{2}$.
  Up to the actions of $SL(2,\CC)$ and
 $SL(3,\CC)$, there is an
 $18$-dimensional family of surfaces of bidegree $(2,3)$ in
 $\PP^{1}\times \PP^{2}$.  These surfaces determine a divisor
 $D(2,3)$
 in the moduli space $M_{2}$ of K3-surfaces with a polarization of
 degree $2$. The Picard group of a general point $S$
 in this divisor has rank $\leq 2$.

  Let $A\subset S$ be the preimage of a
 general line in $\PP^{2}$, and let $B$ be a general fiber of the projection $S\to
 \PP^{1}$. The classes of the curves $A$ and $B$ are
 independent in the Picard group of $S$.  Therefore the Picard group
 has rank at least $2$ for every surface $S\in D(2,3)$.
  The curves $A$ and $B$  determine the intersection matrix
 \begin{equation}
 \label{eq:inters1}
 \left(
 \begin{array}{cc}
     A^{2} & A\cdot B  \\
     A\cdot B & B^{2}
 \end{array}
 \right) \,\,\, = \,\,\, \left(
 \begin{array}{cc}
     2 & 3  \\
     3 & 0
 \end{array}
 \right).
 \end{equation}

 Conversely, any K3-surface with Picard group generated by classes
 $A$ and $B$ having the
 intersection matrix (\ref{eq:inters1}) has a natural embedding in
 $\PP^{1}\times\PP^{2}$ as a divisor of bidegree $(2,3)$:
 The linear system $|A+B|$ defines an embedding of $S$ into
  the Segre variety $\PP^{1}\times\PP^{2} \subset \PP^5$
  (\cite[Proposition 7.15 and Example 7.19]{SD}).

A general pencil of plane sextic curves contains a finite number
of curves that are ramification loci of
K3 double covers with Picard group of rank $2$ and
 intersection matrix with a given discriminant. In our situation,
   this  number is the degree of the hypersurface that forms the
 Zariski closure of $\Sigma_{3,6} \backslash P_{3,6}$.

We shall derive this number from results of \cite{MP}.
Let $R$ be a general surface of bidegree $(2,6)$ in
$\PP^{1}\times\PP^{2}$, and let $X$ be the double cover of
$\PP^{1}\times\PP^{2}$ ramified along $R$.  The general fiber of the
projection $X\to \PP^{1}$ is a curve $\Pi\cong\PP^{1}$ of K3 surfaces with a
polarization $A$ of degree $l=A^{2}=2$.  Since the surface $R$ has degree two in
the first factor, the curve $\Pi$ defines a conic in the
space of ternary sextics. Section 6 in
\cite{MP} computes the
Noether-Lefschetz number $NL^{\Pi}_{1,3}$ of pairs $(S,B)$ where
$[S]\in \Pi$, and
$B$ is the class of a curve of genus $g(B)=1$ on $S$ and degree
$A\cdot B=3$.  By adjunction, the self-intersection equals $B^{2}=2g(B)-2=0$,
and hence the intersection matrix is as above.

There are two curve classes
on the K3 surface $S$, namely $B$ and $3A-B$, that have
 self-intersection $0$ and intersection number $A\cdot B=A\cdot (3A-B)=3$.
 Therefore each such surface $S$ appears twice in the count of \cite{MP}.
 Also, the curve $\Pi$ is a conic in the space of plane sextic curves. So,
 to the get the count of surfaces $S$ in the Noether-Lefschetz locus
 for a line in the space of sextics, we
 altogether must divide the number $NL^{\Pi}_{1,3}$ by $4$.

 In \cite[Corollary 3]{MP},
 the Noether-Lefschetz number $NL^{\Pi}_{g(B),A\cdot B}$ is expressed as the
 coefficient of the
 monomial $\,q^{\delta}\,$ in the expansion of a modular form
 $\Theta^{\Pi}_{l}$ of  weight $21/2$ as a power series
  in $q^{1/2l}$, where $l=A^{2}$ is the degree of the polarization.
  The exponent of the relevant monomial is $\,
 \delta\,=\,\Delta_{l}(g(B),A\cdot B)/{2l}$,
 where
 $\Delta_{l}(g(B),A\cdot B)$ is the discriminant of intersection matrix
 \begin{small} $\begin{pmatrix}
      l & A\cdot B  \\
     A\cdot B & B^{2}
 \end{pmatrix}$.
 \end{small}
  For the conic $\Pi$ in the space of  sextics,
   the modular form $\Theta^{\Pi}_{2}$ has the expansion
 \[
 \Theta^{\Pi}_{2}=-1+150q+1248q^{\frac{5}{4}}+108600q^{2}+332800q^{\frac{9}{4}}+5113200q^{3}+\ldots .
 \]
In our case, we have $\delta= 9/4$, since $l=2$ and the intersection matrix (\ref{eq:inters1}) has discriminant $9$.
We conclude that the number
 \[
 \frac{1}{4}NL^{\Pi}_{1,3}\,\,= \,\,
\frac{1}{4} 332800 \,\, = \,\,
  83200
 \]
equals the degree of the hypersurface of sextics that are sums of three squares.

\smallskip

We now come to the case of quartic surfaces in $\PP^3$. It was shown in
\cite{Ble} that $\partial \Sigma_{4,4} \backslash \partial P_{4,4}$ consists of
quartic forms $F$ that are sums of four squares over $\RR$. Over the complex numbers $\CC$,
such a quartic $F$ is a rank $4$ quadric in quadrics:
\begin{equation}
\label{eq:K3det}
 F \,\, = \,\, fg-hk \,\,= \,\, {\rm det} \begin{pmatrix}  f & h \\     k & g \end{pmatrix}
\,\,\, \hbox{for some} \, \,f,g,h,k \in \CC[x_1,x_2,x_3,x_4]_2.
\end{equation}
The K3 surface $S$ defined by $F$ contains
two distinct pencils of elliptic curves on $S$, one defined
by the rows and one by the columns of the $2\times 2$ matrix.
Up to the action of $SL(4,\CC)$, the determinantal quartics
(\ref{eq:K3det})  form an $18$-dimensional family, hence a divisor in
the moduli space $M_{4}$. A general surface $S$ in this family has Picard rank $2$,
and its Picard group is generated by the class of a plane section and
the class of an elliptic curve in one of the two  elliptic pencils.

 Conversely, any smooth quartic surface $S$ that contains an
elliptic quartic curve is defined by a determinant $F$ as in (\ref{eq:K3det}).
This form of the equation is therefore characterized by the intersection matrix of $S$.
Let $A$ be the class of
the plane section of $S$ in $\PP^{3}$ and let $B$ and $E$ be the
classes of the curves in the two elliptic pencils. Then  $A$ and $B =
2A-E$ have intersection numbers
\begin{equation}
\label{eq:inters3}  \left(
 \begin{array}{cc}
     A^{2} & A\cdot B  \\
     A\cdot B & B^{2}
 \end{array}
 \right)\,\,= \,\, \left(
 \begin{array}{cc}
     A^{2} & A\cdot E  \\
     A\cdot E & E^{2}
 \end{array}
 \right) \,\, = \,\, \left(
 \begin{array}{cc}
     4 & 4  \\
     4 & 0
 \end{array}
 \right).
\end{equation}
For general $S$, the classes $A$ and $B$ generate the Picard group
and have intersection matrix (\ref{eq:inters3}) with discriminant $\Delta_{4}(1,4) = 16$.
 Let $\Pi$ be a general linear pencil of quartic surfaces in $\PP^{3}$.
 The Noether-Lefschetz number $NL^{\Pi}_{1,4}$ counts pairs
 $(S,B)$ where $[S]\in \Pi$ and $B$ is a curve class on $S$ of degree $4$
 and genus $g(B)=1$.  Since there are two classes of
 such curves on $S$, we get the number of surfaces in the pencil
 containing such a curve class, by dividing $NL^{\Pi}_{1,4}$ by $2$.

 As above, the number $NL^{\Pi}_{g(B),A\cdot B}$ is the
 coefficient  of the monomial $q^{\delta}$ in the expansion  of
 a modular form
 $\Theta^{\Pi}_{l}$ of weight $21/2$ as a power series in $q^{1/2l}$,
 where $l=A^{2}$ is the degree of the polarization.
 Here
 \[
 \delta \,\, = \,\,\frac{\Delta_{4}(g(B),A\cdot B)}{8} \,\, = \,\,\frac{16}{8} \,\, = \,\, 2.
 \]
 The modular form for the
general line $\Pi$ in the space of quartic surfaces equals
  \[
 \Theta^{\Pi}_{4}\,\,=\,\,-1+108q+320q^{\frac{9}{8}}+5016q^{\frac{3}{2}}+76950q^{2}+136512q^{\frac{17}{8}}+\ldots
 \]
 This was shown in \cite[Theorem 2]{MP}. We
 conclude that the degree of the hypersurface of
 homogeneous quartics in $4$ unknowns that are sums of $4$ squares~is
 \[
 \frac{1}{2}NL^{\Pi}_{1,4} \,\, = \,\, \frac{1}{2} 76950 \,\, = \,\,
  38475.
 \]
 This completes the proof of Theorem \ref{thm:first}.
 \end{proof}

\begin{remark}
It was pointed out to us by Giorgio Ottaviani
that the  smooth ternary sextics that are rank three quadrics in cubic forms are known to coincide with the
smooth sextics that have an effective even theta characteristic (cf.~\cite[Proposition 8.4]{Ott}).
Thus the algebraic boundary of Hilbert's SOS cone for ternary sextics is also
related to the theta locus in the moduli space $ \overline{\mathcal{M}}_{10}$.  \qed
\end{remark}

\section{Rank Conditions on Hankel Matrices}

We now consider the convex cone $(\Sigma_{n,2d})^\vee$
 dual to the cone $\Sigma_{n,2d}$. Its elements are the linear forms
$\ell$ on $\RR[x_1,\ldots,x_n]_{2d}$ that are non-negative
on squares. Each such linear form $\ell$ is represented by
its associated quadratic form on $\RR[x_1,\ldots,x_n]_d$,
which is defined by $f \mapsto  \ell(f^2)$.
The symmetric matrix which expresses this quadratic form with respect to
the monomial basis of $\RR[x_1,\ldots,x_n]_d$ is denoted
$H_\ell$, and it is called the {\em Hankel matrix} of $\ell$.
It has format
 $\binom{n+d-1}{d} \times \binom{n+d-1}{d}$, and its rows
 and columns are indexed by elements
 of $\{(i_1,i_2,\ldots,i_n) \in \NN^n : i_1 + i_2 + \cdots + i_n = d \}$.
We shall examine the two cases of interest.

The  Hankel matrix for ternary sextics ($n=d=3$) is the $10 {\times} 10$-matrix
\begin{equation}
\label{eq:Hankel33}
H_\ell \,\, = \,\, \begin{bmatrix}
a_{006} & a_{015} & a_{024} & a_{033} & a_{105} & a_{114} & a_{123} & a_{204} & a_{213} & a_{303} \\
a_{015} & a_{024} & a_{033} & a_{042} & a_{114} & a_{123} & a_{132} & a_{213} & a_{222} & a_{312} \\
a_{024} & a_{033} & a_{042} & a_{051} & a_{123} & a_{132} & a_{141} & a_{222} & a_{231} & a_{321} \\
a_{033} & a_{042} & a_{051} & a_{060} & a_{132} & a_{141} & a_{150} & a_{231} & a_{240} & a_{330} \\
a_{105} & a_{114} & a_{123} & a_{132} & a_{204} & a_{213} & a_{222} & a_{303} & a_{312} & a_{402} \\
a_{114} & a_{123} & a_{132} & a_{141} & a_{213} & a_{222} & a_{231} & a_{312} & a_{321} & a_{411} \\
a_{123} & a_{132} & a_{141} & a_{150} & a_{222} & a_{231} & a_{240} & a_{321} & a_{330} & a_{420} \\
a_{204} & a_{213} & a_{222} & a_{231} & a_{303} & a_{312} & a_{321} & a_{402} & a_{411} & a_{501} \\
a_{213} & a_{222} & a_{231} & a_{240} & a_{312} & a_{321} & a_{330} & a_{411} & a_{420} & a_{510} \\
a_{303} & a_{312} & a_{321} & a_{330} & a_{402} & a_{411} & a_{420} & a_{501} & a_{510} & a_{600}
\end{bmatrix}
\end{equation}
The Hankel matrix for quaternary quartics $(n=4, d=2)$ also has size $10 {\times} 10$:
\begin{equation}
\label{eq:Hankel42}
H_\ell \, = \, \begin{bmatrix}
 a_{0004} &  a_{0013} &  a_{0022} &  a_{0103} &  a_{0112} &  a_{0202} &  a_{1003} &  a_{1012} &  a_{1102} &  a_{2002} \\
 a_{0013} &  a_{0022} &  a_{0031} &  a_{0112} &  a_{0121} &  a_{0211} &  a_{1012} &  a_{1021} &  a_{1111} &  a_{2011} \\
 a_{0022} &  a_{0031} &  a_{0040} &  a_{0121} &  a_{0130} &  a_{0220} &  a_{1021} &  a_{1030} &  a_{1120} &  a_{2020} \\
 a_{0103} &  a_{0112} &  a_{0121} &  a_{0202} &  a_{0211} &  a_{0301} &  a_{1102} &  a_{1111} &  a_{1201} &  a_{2101} \\
 a_{0112} &  a_{0121} &  a_{0130} &  a_{0211} &  a_{0220} &  a_{0310} &  a_{1111} &  a_{1120} &  a_{1210} &  a_{2110} \\
 a_{0202} &  a_{0211} &  a_{0220} &  a_{0301} &  a_{0310} &  a_{0400} &  a_{1201} &  a_{1210} &  a_{1300} &  a_{2200} \\
 a_{1003} &  a_{1012} &  a_{1021} &  a_{1102} &  a_{1111} &  a_{1201} &  a_{2002} &  a_{2011} &  a_{2101} &  a_{3001} \\
 a_{1012} &  a_{1021} &  a_{1030} &  a_{1111} &  a_{1120} &  a_{1210} &  a_{2011} &  a_{2020} &  a_{2110} &  a_{3010} \\
 a_{1102} &  a_{1111} &  a_{1120} &  a_{1201} &  a_{1210} &  a_{1300} &  a_{2101} &  a_{2110} &  a_{2200} &  a_{3100} \\
 a_{2002} &  a_{2011} &  a_{2020} &  a_{2101} &  a_{2110} &  a_{2200} &  a_{3001} &  a_{3010} &  a_{3100} &  a_{4000}
\end{bmatrix} \!\!\!\!
\end{equation}
We note that what we call Hankel matrix is known as
{\em moment matrix} in the literature on optimization and functional analysis,
and it is known as {\em (symmetric) catalecticant} in the literature
on commutative algebra and algebraic geometry.

The dual cone $(\Sigma_{3,3})^\vee$ is the spectrahedron consisting of all
positive semidefinite Hankel matrices (\ref{eq:Hankel33}). The
dual cone $(\Sigma_{4,2})^\vee$ is the spectrahedron consisting of all
positive semidefinite matrices (\ref{eq:Hankel42}).
This convex duality offers a way of representing
our Noether-Lefschetz loci via their
projective dual varieties.

\begin{theorem} \label{thm:fourth}
The Hankel matrices (\ref{eq:Hankel33}) having rank $\leq 7$ constitute a
rational projective variety  of dimension $21$
and degree $2640$. Its dual is the
hypersurface of sums of three squares of cubics.
Likewise, the Hankel matrices (\ref{eq:Hankel42}) having rank $\leq 6$ constitute
 a rational projective variety of dimension $24$ and degree
$28314$.
Its dual is the hypersurface of sums of four squares of quadrics.
\end{theorem}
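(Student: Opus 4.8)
The plan is to parametrize both determinantal varieties by Grassmannians, via the residue construction of \cite{CD}, to read off dimension, rationality and the dual variety from that parametrization, and then to obtain the degree from the classical degree formula for symmetric determinantal loci. Consider the ternary case, $R=\CC[x_1,x_2,x_3]$. The map $\ell\mapsto H_\ell$ identifies the $28$-dimensional space $R_6^\vee$ with a linear subspace $\Lambda\cong\PP^{27}$ of the $\PP^{54}$ of symmetric $10\times10$ matrices. Let $G=G(3,R_3)$ be the Grassmannian of $3$-dimensional subspaces $K=\langle f,g,h\rangle\subset R_3$. On the dense open locus where $f,g,h$ form a regular sequence, the Artinian Gorenstein ring $R/(f,g,h)$ has socle degree $3+3+3-3=6$, and the perfect pairing $(R/(f,g,h))_3\times(R/(f,g,h))_3\to(R/(f,g,h))_6\cong\CC$ is, up to scalar, the Hankel form $H_{\ell_K}$ of a well-defined point $\ell_K\in\Lambda$ whose kernel is exactly $K$; hence $\rank H_{\ell_K}=\dim(R/(f,g,h))_3=7$. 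This gives a dominant rational map $\phi\colon G(3,10)\dashrightarrow\{\,H_\ell:\rank H_\ell\le7\,\}$, and $\phi$ is birational because $K=\ker H_{\ell_K}$ recovers the source point. So the rank-$\le7$ Hankel variety is birational to $G(3,10)$, hence rational of dimension $3\cdot7=21$. The quaternary case is identical with $R=\CC[x_1,\dots,x_4]$ and $G=G(4,R_2)$: a regular sequence of four quadrics gives $R/(q_1,\dots,q_4)$ Gorenstein of socle degree $2\cdot4-4=4$, the degree-$2$ pairing has rank $\dim(R/(q_1,\dots,q_4))_2=6$, and one obtains a birational map $G(4,10)\dashrightarrow\{\,H_\ell:\rank H_\ell\le6\,\}$, so this variety is rational of dimension $4\cdot6=24$.

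\emph{The dual variety.} Write $Y=\overline{\mathrm{im}\,\phi}$ and let $D_r\subset\PP^{54}$ be the locus of symmetric $10\times10$ matrices of rank $\le r$. At a general point $H_{\ell_K}\in Y$ the rank is exactly $r$, so $H_{\ell_K}$ is a smooth point of $D_r$ and its tangent space there is the space of symmetric matrices vanishing on $K\times K$. Intersecting with $\Lambda$, the tangent space to $Y$ at $H_{\ell_K}$ is cut out by the $\binom{\dim K+1}{2}$ linear equations $\ell'\mapsto\ell'(f_if_j)$, $1\le i\le j\le\dim K$, for a basis $f_1,\dots,f_{\dim K}$ of $K$; a dimension count shows these are independent, confirming $\dim Y=21$ (resp.\ $24$). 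Dualizing, a form $[F]$ lies in $Y^\vee$ iff $\ell'\mapsto\ell'(F)$ is a linear combination of the $\ell'\mapsto\ell'(f_if_j)$, and since $R_{2d}\to(R_{2d}^\vee)^\vee$ is an isomorphism this forces $F$ into the image of $\mathrm{Sym}^2K\to R_{2d}$ — that is, $F$ is a quadric of rank $\le r$ in the forms spanning $K$. For $r=7$ this says $F$ is a sum of three squares of cubics, and for $r=6$ that $F$ is a sum of four squares of quadrics. Hence $Y^\vee$ is the corresponding hypersurface of Theorem~\ref{thm:first}, and by biduality $Y$ is its projective dual; in particular $Y$ is irreducible.

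\emph{Degree.} The point is that $Y$ has the same degree as the \emph{generic} symmetric determinantal variety $D_r$ of the same rank bound. Symmetric determinantal loci are arithmetically Cohen--Macaulay, so once the linear forms defining $\Lambda$ meet $D_r$ in the expected dimension $21$ (resp.\ $24$), they form a regular sequence on the homogeneous coordinate ring of $D_r$, and the section $D_r\cap\Lambda$ is Cohen--Macaulay, reduced, of that pure dimension, and of degree $\deg D_r$. By the classical degree formula for symmetric determinantal varieties, $\deg D_r=\prod_{i=0}^{n-r-1}\binom{n+i}{n-r-i}\big/\binom{2i+1}{i}$ with $n=10$, which equals $120\cdot\tfrac{55}{3}\cdot\tfrac{6}{5}=2640$ for $r=7$ and $210\cdot55\cdot\tfrac{33}{5}\cdot\tfrac{13}{35}=28314$ for $r=6$. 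What remains is the expected-dimension statement together with the identification $D_r\cap\Lambda=Y$; both follow from a single estimate: the Hankel matrices \emph{not} of the form $\ell_K$ for a regular sequence $K$ — those of rank $<r$, and those of rank $r$ whose kernel in $R_d$ has a base point — form a locus of dimension strictly less than $\dim Y$. Indeed, since $\mathrm{im}\,\phi$ is already $21$- (resp.\ $24$-) dimensional, Krull's theorem then forces $D_r\cap\Lambda$ to have that pure dimension and to equal $\overline{\mathrm{im}\,\phi}=Y$, and the defining forms of $\Lambda$ to form a regular sequence on $D_r$.

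\emph{Main obstacle.} This last dimension estimate is the heart of the proof: a special linear section of a determinantal variety may have strictly smaller degree than a generic one, so the genericity of $\Lambda$ relative to $D_r$ must be verified, not assumed. I would do this by stratifying $\Lambda$ according to the Hilbert function of the apolar Gorenstein algebra $R/\ell^{\perp}$ (equivalently, by the ranks of all catalecticants of $\ell$), bounding the dimension of each stratum via Macaulay's inverse systems, and checking that the regular-sequence stratum is the unique one of top dimension; the remaining strata, such as the rank-$6$ Hankel matrices apolar to six general points in $\PP^2$, are of visibly smaller dimension. A direct intersection-theoretic computation of $\deg Y$ on the Grassmannian is less convenient here, because $\phi$ is only rational, with base locus along the resultant hypersurface of non-regular sequences, so the Pl\"ucker pullback of $\mathcal{O}(1)$ overcounts badly. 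As an independent confirmation, both $2640$ and $28314$ can be recomputed from scratch by a monodromy/witness-set computation along the Grassmannian parametrization, in the spirit of Section~5.
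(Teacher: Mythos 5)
Your argument follows the paper's overall plan — dimension and rationality from the Grassmannian parametrization via the global residue (equivalently, apolarity), and the degree from the Harris--Tu formula for symmetric determinantal loci after checking that the Hankel space meets $S_r$ properly — so the core of the proof is the same. Where you genuinely depart from the paper is the projective-duality claim: the paper obtains it by citing Blekherman's Corollaries 5.2 and 5.7 on the extreme rays of $\Sigma_{3,6}^\vee$ and $\Sigma_{4,4}^\vee$, combined with the standard translation between cone duality and projective duality, whereas you compute the tangent space $T_{H_{\ell_K}}Y = \Lambda \cap T_{H_{\ell_K}}D_r = \{\ell' : \ell'(f_if_j)=0\}$ directly and dualize to find that $Y^\vee$ is the closure of $\{[F] : F\in \mathrm{Sym}^2K\}$. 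That is a perfectly good, self-contained algebro-geometric alternative; it makes the theorem independent of Blekherman's convex-geometric machinery, at the price of having to justify that a generic $K$ is a regular sequence and that the six (resp.\ ten) forms $f_if_j$ are independent. One typographical slip: you write that $F$ is ``a quadric of rank $\le r$'' in the spanning forms of $K$; the bound should be $\dim K = 10-r$, i.e.\ $3$ and $4$ respectively, which is what your conclusion actually uses.

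On the degree, you are right to flag the subtlety: the Cohen--Macaulay argument gives the degree of the \emph{scheme} $S_r\cap\Lambda$ once the intersection is proper, but one also needs to know that $S_r\cap\Lambda$ is irreducible and generically reduced (equivalently, that $S_r\cap\Lambda = Y$ with multiplicity one) before that number can be attributed to the closure of the image of the Grassmannian. This is where your proof stops short — you propose a stratification of $\Lambda$ by Hilbert function of the apolar algebra and assert, without carrying it out, that the regular-sequence stratum dominates. The published proof is actually no more explicit on this point (it simply asserts that the intersection ``was seen to have dimension $21$'' and that properness ``ensures the degree remains $2640$''), so you have correctly identified the weakest step rather than introduced a new gap; but as written your argument does not close it either. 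If you want to complete this, a clean route is to show that the kernel of any Hankel matrix of rank exactly $r$ in $\Lambda$ is a $(10-r)$-dimensional subspace of $R_d$ with no common zero for generic such $\ell$, and to bound the rank-$<r$ locus by Macaulay's growth theorem — this is the content of your sketch, and it does work, but it needs to be made precise for the proof to be self-contained.
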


\begin{proof}
The fact that these varieties are rational and irreducible of the asserted dimensions
can be seen as follows. Consider the Grassmannian ${\rm Gr}(3,10)$ which
parametrizes three-dimensional linear subspaces $F$ of the
$10$-dimensional space
$\RR[x_1,x_2,x_3]_3 $ of ternary cubics.
This Grassmannian is rational and its dimension equals $21$.
The {\em global residue} in $\PP^2$, as defined in \cite[\S 1.6]{CD}, specifies
 a rational map $\,F \mapsto {\rm Res}_{\langle F \rangle}\,$  from
 ${\rm Gr}(3,10)$ into $\PP((\RR[x_1,x_2,x_3]_6)^*) \simeq \PP^{27}$.
 The base locus of this map is the resultant of three ternary cubics, so
  ${\rm Res}_{\langle F \rangle}$ is well-defined whenever the ideal
  $\langle F \rangle$ is a complete intersection in $\RR[x_1,x_2,x_3]$.
  The value ${\rm Res}_{\langle F \rangle}(P)$ of this linear form
  on a ternary sextic $P$ is the image of
    $P$ modulo the ideal $\langle F \rangle$, and it can be computed via
any Gr\"obner basis normal form.
Our map $F \mapsto \ell$ is birational because it has an explicit inverse:
$F = {\rm kernel}(H_\ell)$. The inverse simply maps
the rank $7$ Hankel matrix representing $\ell$ to its kernel.

The situation is entirely analogous for $n=4,d=2$. Here we consider the
$24$-dimensional  Grassmannian ${\rm Gr}(4,10)$ which
parametrizes $4$-dimensional linear subspaces $F$
$\RR[x_1,x_2,x_3,x_4]_2 $.
The global residue in $\PP^3$ specifies a rational map
$$ {\rm Gr}(4,10)\, \dashrightarrow \,\PP((\RR[x_1,x_2,x_3,x_4]_4)^*) \simeq \PP^{34}, \,\,
\,F \,\mapsto \,{\rm Res}_{\langle F \rangle}. $$
This map is birational onto its image, the variety of rank $6$ Hankel matrices
(\ref{eq:Hankel42}), and the inverse of that map takes
a rank $6$ Hankel matrix  (\ref{eq:Hankel42}) to its kernel.

To determine the degrees of our two Hankel determinantal varieties, we argue as follows.
The variety $S_r$ of all symmetric $10 \times 10$-matrices of rank $\leq r$ is
known to be irreducible and arithmetically Cohen-Macaulay,
it has codimension $\binom{11-r}{2}$,
 and its degree is given by the following formula
due to Harris and Tu \cite{HT}:
\begin{equation}
\label{eq:HTbino}
{\rm degree}(S_r) \,\,\, = \,\,\,
 \prod_{j=0}^{9-r} \left( \binom{10+j}{10-r-j} / \binom{ 2j+1}{j} \right).
 \end{equation}
 Thus  $S_r$
 has codimension $6$ and degree  $2640$ for $r=7$, and it
 has codimension $10$ and degree $28314$ for $r= 6$.
The projective linear subspace of Hankel matrices (\ref{eq:Hankel33})
has dimension $27$. Its intersection with $S_7$ was
seen to have dimension $21$. Hence the intersection has the
expected codimension $6$ and is proper. That the intersection is
proper ensures that the degree remains $2640$.
Likewise, the projective linear subspace of Hankel matrices (\ref{eq:Hankel42}) has
dimension $34$, and its intersection with $S_6$ has dimension $24$.
The intersection has the expected codimension $10$, and
we conclude as before that  the degree equals $28314$.

It remains to be seen that the two Hankel determinantal varieties are
projectively dual to the Noether-Lefschetz hypersurfaces in Theorem \ref{thm:first}.
This follows from \cite[Corollary 5.2]{Ble} for sextics curves in $\PP^2$
and from \cite[Corollary 5.7]{Ble} for quartic surfaces in $\PP^3$.
These results characterize the relevant extreme rays of $\Sigma_{3,6}^*$ and
$\Sigma_{4,4}^*$ respectively. These rays are dual to the hyperplanes
that support $\partial \Sigma_{3,6}$ and $\partial \Sigma_{4,4}$ at smooth points
representing strictly positive polynomials.
By passing to the Zariski closures, we conclude that the algebraic boundaries
of $\Sigma_{3,6}\backslash P_{3,6}$ and $\Sigma_{4,4} \backslash P_{4,4}$
are projectively dual to the Hankel determinantal varieties above.
For a general introduction to the relationship between projective duality and
cone duality in convex algebraic geometry we refer to \cite{RS}.
\end{proof}

\begin{remark}
\label{rmk:kristian}
 In the space $\PP({\rm Sym}^2V) $ of quadratic forms on a $10$-dimensional vector space $V^*$, the subvariety
 $S_r $ of forms of rank $\leq r$ is the  dual variety to $S^*_{10-r}\subset
 \PP({\rm Sym}^2V^*)$.  Identifying $V$ with ternary cubics, the space of $10\times 10$ Hankel matrices
 (\ref{eq:Hankel33})  form a $27$-dimensional linear subspace $H 
 \subset \PP({\rm Sym}^2V^{*}) $.
 For $r \leq 3$, we have ${\rm dim}(S_r)<27$ and the variety dual to $H_{10-r}=S^*_{10-r}\cap H$
 coincides with the image $\Sigma_r$  of the birational projection of $S_r$
 into $H^*$.  That image is the variety of sextics that are quadrics of rank $\leq r$ in cubics.  Furthermore, when $r\leq 2$ the projection from $S^*_r$ to $\Sigma_r$ is a morphism, so the degrees of these two varieties coincide.  When $r=3$, the projection is not a morphism and the degree drops to $83200$.
  A similar analysis works for $V = \RR[x_1,x_2,x_3,x_4]_2$ with $r \leq 4$. \qed
\end{remark}

\section{Extreme Non-negative Forms}

For each of Hilbert's two critical cases, in Section 2 we examined
the hypersurface separating $\Sigma_{n,2d}$  and $P_{n,2d}\backslash \Sigma_{n,2d}$.
In this section we take an alternative look at this separation, namely, we focus on the
extreme rays of the cone $P_{n,2d}$ of non-negative forms
 that do not lie in the SOS subcone   $\Sigma_{n,2d}$.
We begin with the following result on zeros of
non-negative forms in the two Hilbert cases.

\begin{proposition}\label{thm CLR}
Let $p$  be a non-negative form in $P_{3,6}$ or $P_{4,4}$. If $p$ has more than $10$ zeros, then
$p$ has infinitely many zeros and it is a sum of squares.
\end{proposition}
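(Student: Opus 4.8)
The plan is to prove the two assertions of the Proposition separately: first that a form in $P_{3,6}$ or $P_{4,4}$ with finitely many real zeros has at most $10$ of them (so that having more than $10$ forces infinitely many), and then that any form in these two cones with infinitely many real zeros is automatically a sum of squares. The number $10$ is $\binom{n+d-1}{d}=\dim_{\RR}\RR[x_1,\dots,x_n]_d$, the dimension of the space of ``half degree'' forms — ternary cubics, respectively quaternary quadrics — which is the very same $10$ that governs the Hankel/Gram picture of Section~3. I would first record two elementary facts about a non-negative form $p$ of even degree $2d$: (i) at a real zero $v$ the form attains its global minimum, so $\nabla p(v)=0$ and $\nabla^2 p(v)$ is positive semidefinite (and singular in the radial direction by Euler's identity); in particular $p$ vanishes to order $\geq 2$ at $v$, and $v$ is a singular point of the hypersurface $\{p=0\}$; (ii) if $p=\sum_i q_i^2$ with $q_i\in\RR[x]_d$, then every $q_i$ vanishes at every real zero of $p$ — equivalently, a positive semidefinite Gram matrix $G$ of $p$ satisfies $\mathbf m(v)\in\ker G$ for each such $v$, where $\mathbf m$ is the vector of degree-$d$ monomials.

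For the first assertion I would follow the classical analysis of Choi, Lam and Reznick of the real zeros of positive semidefinite forms. Assume the real zero set $Z=\{v_1,\dots,v_k\}$ is finite. Each $v_i$ is a double point of the hypersurface $\{p=0\}$, so the $v_i$ impose order-$2$ vanishing conditions on $\RR[x]_{2d}$; the point is to play these off against the conditions the $v_i$ impose on the $10$-dimensional space $\RR[x]_d$, using the positive semidefiniteness of the Hessians $\nabla^2 p(v_i)$ together with the apolarity / Cayley--Bacharach relations forced by having $k$ double points on a form of degree $2d$. The outcome — which in these two borderline cases is exactly the Choi--Lam--Reznick bound — is that $k\leq 10$ whenever $Z$ is finite. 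Hence if $p$ has more than $10$ real zeros, $Z$ is infinite, which is the first claim.

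For the second assertion, suppose $Z$ is infinite; I would show $p$ is a sum of squares by peeling off squares of real components until the residue falls into a range where Hilbert's classification applies. Factor $p=\prod_j g_j^{e_j}$ into $\RR$-irreducible factors. By fact (i), if $g_j$ has a real point that is a smooth point of $\{g_j=0\}$ then $g_j$ changes sign there, so $e_j$ must be even; since an irreducible plane curve has only finitely many singular points, for $n=3$ this applies to every $g_j$ with infinitely many real zeros. Writing $p=D^2N$ with $D=\prod_j g_j^{\lfloor e_j/2\rfloor}$ and $N=\prod_{j:\,e_j\ \mathrm{odd}}g_j$, the form $N$ is non-negative (by density) with finite real zero set, and $\deg N=6-2\deg D\leq 4$ because $\deg D\geq 1$; thus $N\in P_{3,0}\cup P_{3,2}\cup P_{3,4}$ is a sum of squares by Hilbert, and so is $p=\sum_j(D s_j)^2$. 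For $n=4$ the same scheme applies, with one extra subtlety: an $\RR$-irreducible quartic surface may be singular along a whole real curve, so a factor $g_j$ of odd multiplicity with a positive-dimensional real zero locus can occur — but then, by fact (i), $p$ and hence $g_j$ is semidefinite along that curve, and one shows such a $g_j$ is itself a sum of two squares over $\RR$; after dividing out and invoking $P_{4,0}=\Sigma_{4,0}$ and $P_{4,2}=\Sigma_{4,2}$, the argument finishes. Together with the previous paragraph this proves the Proposition.

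The main obstacle is the dimension count in the finite-zeros case. It cannot be carried out by naive interpolation on $\RR[x]_{2d}$, since eleven double points may well impose independent conditions there; the genuine input — the Choi--Lam--Reznick argument — is the interplay of the positive semidefiniteness of the Hessians at the zeros with the Cayley--Bacharach geometry of the double-point scheme, calibrated against the $10$-dimensional space of half-degree forms. A secondary difficulty, present only in the $n=4$ part of the second assertion, is the classification of positive-dimensional real zero loci: controlling $\RR$-irreducible quartic surfaces singular along a curve and showing that, under the non-negativity constraint, such a factor splits as a sum of two squares over $\RR$.
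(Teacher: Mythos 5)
Your proposal has a genuine gap in the quaternary quartic case, and it is precisely the point the paper's proof flags. You write that ``the Choi--Lam--Reznick argument,'' calibrated against the $10$-dimensional space of half-degree forms, yields $k\leq 10$ in both borderline cases ``exactly.'' That is not what Choi, Lam and Reznick prove. Their argument for $P_{3,6}$ does give the sharp bound $10$, but for $P_{4,4}$ it only gives $11$: a non-negative quaternary quartic with finitely many real zeros has at most $11$ of them. The improvement from $11$ to $10$ is not an algebraic/dimension-count refinement at all; it is a topological input. The paper invokes Kharlamov's theorem (cf.\ also Rohn) that a real quartic surface in $\RR\PP^3$ has at most $10$ connected components: since a non-negative quartic with finitely many real zeros has each zero as an isolated (hence connected) component of its real locus, Kharlamov immediately caps the count at $10$. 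Your proposal omits this entirely, and without it the stated Proposition is not reached for $P_{4,4}$ — you would only get the weaker statement with $11$ in place of $10$.

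Beyond that gap, you are also taking a genuinely different route from the paper. The paper's proof is essentially a citation: the $P_{3,6}$ case is quoted verbatim from Choi--Lam--Reznick, and the $P_{4,4}$ case is their $11$-zero bound upgraded by Kharlamov. You instead attempt a self-contained proof, in particular re-deriving the ``infinitely many zeros implies SOS'' half by factoring $p$ over $\RR$, arguing that any $\RR$-irreducible factor with a smooth real zero must occur to even multiplicity, peeling off the resulting square $D^2$, and landing the residue $N$ in degree $\leq 4$ (resp.\ $\leq 2$) where Hilbert's classification applies. That plan is plausible for $n=3$ (and is not what CLR needs to be cited for in the paper), but it is not what the paper does, and for $n=4$ the ``extra subtlety'' you acknowledge — controlling an $\RR$-irreducible factor of odd multiplicity whose real locus is a curve lying in its singular set and showing it is a sum of two squares — is left as an assertion; it is exactly the kind of case analysis the citation to \cite{CLR} is meant to absorb. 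I would recommend either fully justifying that step or following the paper and citing Choi--Lam--Reznick for both assertions, then supplying the Kharlamov/Rohn argument to lower $11$ to $10$ for $P_{4,4}$.
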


\begin{proof}
The statement for $P_{3,6}$ was proved by
Choi, Lam and Reznick in \cite {CLR}.
They also showed the statement for the cone $P_{4,4}$
but with ``$11$ zeros'' instead of ``$10$ zeros''.
To reduce the number from $11$ to $10$, we use
Kharlamov's theorem in \cite{Kha} which states that
the number of connected components
of any quartic surface in real projective $3$-space is $\leq 10$.
See also Rohn's classical work~\cite{Rohn}.
\end{proof}

Recall that a face of a closed convex set $K$ in a finite-dimensional
real vector space is {\em exposed}
if it is the intersection of $K$ with a supporting hyperplane. The extreme rays of $K$
lie in the closure (and hence in the Zariski closure)
of the set of exposed extreme rays  \cite{Sch}.
A polynomial $p \in P_{n,2d}\backslash \Sigma_{n,2d}$  that generates an
extreme exposed ray of $P_{n,2d}$ will be called an
{\em extreme non-negative form}.

Our first goal is to prove Theorem \ref{thm:second},
which characterizes the Zariski closure of the semi-algebraic set of all
extreme non-negative forms for $n=d=3$.

\begin{proof}[Proof of Theorem \ref{thm:second}]
Suppose $p \in P_{3,6} \backslash \Sigma_{3,6}$ is an extreme  form.
By \cite[Lemma 7.1]{Rez}, the polynomial $p$ is irreducible.
Moreover, we claim that
 $|\mathcal{V}_{\mathbb{R}}(p)| \geq 10$. It is not hard to show that $p$ is an extreme non-negative form if and only if $\mathcal{V}_{\mathbb{R}}(p)$
is maximal among all forms in $P_{n,2d}$. In other words, if $p$ is an extreme non-negative form and
$\mathcal{V}_{\mathbb{R}}(p) \subseteq \mathcal{V}_{\mathbb{R}}(q)$ for some $q \in P_{n,2d}$
then $q=\lambda p$ for some $\lambda \in \mathbb{R}$.
Now suppose that $|\mathcal{V}_{\mathbb{R}}(p)| \leq 9$. Then there is a ternary cubic $q \in P_{3,3}$ that vanishes on $\mathcal{V}_{\mathbb{R}}(p)$.
We have $q^2 \in P_{3,6}$ and $\mathcal{V}_{\mathbb{R}}(p) \subseteq \mathcal{V}_{\mathbb{R}}(q)$. This contradicts maximality of $\mathcal{V}_{\mathbb{R}}(p)$. 
By Proposition \ref{thm CLR} we conclude that $|\mathcal{V}_{\mathbb{R}}(p)|=10$.

Let $C$ be the sextic curve in the complex projective plane $\PP^2$ defined by $p = 0$.
Since $C$ is irreducible, it must have non-negative genus. Each point 
in $\mathcal{V}_{\mathbb{R}}(p)$ is a singular point of the complex curve $C$.
As this gives $C$ ten singularities, it follows by the genus formula 
that $C$ can have no more singularities, and furthermore that all of the real zeros of $p$ are ordinary singularities.
 The genus of $C$ is zero and therefore it is an irreducible rational curve.

Let $\mathcal{S}_{6,0}$ denote the Severi variety of rational sextic curves in $\mathbb{P}^2$.
We have shown that $\mathcal{S}_{6,0}$ contains the semi-algebraic set of extreme forms in
$P_{3,6} \backslash \Sigma_{3,6}$.
This is a subvariety in the  $\PP^{27}$ of ternary sextics.
The Severi variety $\mathcal{S}_{6,0}$ is known to be
       irreducible, and the general member $C$ has exactly $10$
           nodes.  Moreover, that set of $10$ nodes in $\PP^2$
    uniquely identifies the rational curve $C$.

    Each rational sextic curve in $\PP^2$ is the image of a
    morphism $\PP^{1} \rightarrow \PP^2$ defined by  three
    binary forms of degree $6$. To choose these, we have
    $3 \cdot 7 = 21$ degrees of freedom. However, the image in $\PP^2$
        is invariant under the natural action of
    the $4$-dimensional  group $GL(2,\CC)$ on the parametrization,
         and hence $\mathcal{S}_{6,0}$ has dimension $21-4 = 17$.
The degree of $\mathcal{S}_{6,0}$ is the number of rational sextics
passing through $17$ given points in $\PP^2$, which is one of the
Gromov-Witten numbers of $\PP^2$.
  For rational curves, these numbers were computed by
Kontsevich and Manin \cite{KM}    using an explicit recursion formula equivalent to the
    WDVV equations.  From their recursion, one gets
   $ {\rm degree}(\mathcal{S}_{6,0}) =   26312976$.

      To complete the proof, it remains to be shown that
     the semi-algebraic set of extreme forms in
$P_{3,6} \backslash \Sigma_{3,6}$ is Zariski dense in the
Severi variety $\mathcal{S}_{6,0}$. We deduce this from \cite[Theorem 4.1 and Section 5]{Rez}.
There, starting with a specific set $\Gamma$ of $8$ points in $\PP^2$, an explicit $1$-parameter family of
non-negative sextics with 10 zeros, 8 of which come from $\Gamma$, was constructed using
{\em Hilbert's Method}.
Furthermore, by Theorem 4.1, Hilbert's Method can be applied to any 8 point configuration in the neighborhood of $\Gamma$.
By a continuity argument, all 8 point configurations sufficiently close to $\Gamma$ will also have a
1-parameter family of non-negative forms with 10 zeros. All such forms are exposed extreme rays.

This identifies a semi-algebraic set of extreme non-negative forms having dimension $16+1 = 17$.
We conclude that this set is Zariski dense in $\mathcal{S}_{6,0}$.
\end{proof}

\begin{remark}
\label{rmk:bruce}
Our analysis implies the following result concerning
$\partial P_{3,6} \backslash \Sigma_{3,6}$.
All exposed extreme rays are sextics with ten acnodes,
and all extreme rays are limits of sextics with ten acnodes.
This proves the second part of Reznick's Conjecture 7.9 in \cite{Rez}.
Indeed, in the second paragraph of the above proof
we saw that $C$ has ten ordinary singularities.
These cannot be cusps since $p \geq 0$. Hence
they have to be what is classically called
{\em acnodes}, or
{\em round zeros} in \cite{Rez}.
\end{remark}

Our next goal is to  derive Theorem \ref{thm:third},
the analogue to Theorem \ref{thm:second} for quartic surfaces in $\PP^3$.
The role of the Severi variety $\mathcal{S}_{6,0}$ is now played by
the variety $\mathcal{QS}$ of quartic symmetroids, {\it i.e.}~the surfaces whose
defining polynomial equals
\begin{equation}
\label{eq:symmetroid}
F(x_1,x_2,x_3,x_4) \quad = \quad {\rm det} \bigl(
A_1 x_1 + A_2 x_2 + A_3 x_3 + A_4 x_4 \bigr),
\end{equation}
where $A_1,A_2,A_3,A_4$ are complex
symmetric $4 \times 4$-matrices.

\begin{lemma}
The variety $\mathcal{QS}$ is irreducible and has codimension $10$
in $\PP^{34}$.
\end{lemma}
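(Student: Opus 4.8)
The plan is to realize $\mathcal{QS}$ as the Zariski closure of the image of an explicit morphism and then run a fiber‑dimension count. Let $\mathrm{Sym}$ denote the $10$‑dimensional space of complex symmetric $4\times 4$ matrices, and let $W=\mathrm{Sym}^{\oplus 4}$, an affine space of dimension $40$. The rule $(A_1,A_2,A_3,A_4)\mapsto \det(A_1x_1+A_2x_2+A_3x_3+A_4x_4)$ defines a polynomial map $\Phi: W\to \CC[x_1,x_2,x_3,x_4]_4$ into the $35$‑dimensional space of quartic forms, and $\mathcal{QS}$ is by definition the projectivization of $\overline{\Phi(W)}$. Since $W$ is irreducible, so is $\overline{\Phi(W)}$, and hence so is $\mathcal{QS}$; this settles irreducibility. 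Because $\Phi(tA)=t^4\Phi(A)$, the image is a cone, so $\dim\mathcal{QS}=\dim\overline{\Phi(W)}-1$, and the whole statement reduces to proving that a general fiber of $\Phi$ has dimension exactly $15$: this gives $\dim\overline{\Phi(W)}=25$, hence $\dim\mathcal{QS}=24$, i.e. codimension $10$ in $\PP^{34}$.

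For the inequality $\dim\mathcal{QS}\le 24$ — equivalently, that a general fiber of $\Phi$ has dimension at least $15$ — I would use the built‑in symmetry. The group $GL(4,\CC)$ acts on $W$ by $g\cdot(A_1,\dots,A_4)=(g^{\top}A_1g,\dots,g^{\top}A_4g)$, and $\Phi$ of the transformed tuple equals $(\det g)^2$ times $\Phi$ of the original. Hence the $15$‑dimensional subgroup $G_1=\{g\in GL(4,\CC):\det g=\pm 1\}$ preserves every fiber of $\Phi$. For a general tuple the stabilizer is finite: after moving the nondegenerate form $A_1$ to the identity one lands inside $O(4,\CC)$, and then, $A_2$ having distinct eigenvalues, any $g$ fixing $A_2$ must commute with it and so be diagonal, forcing $g\in\{\mathrm{diag}(\pm 1,\dots,\pm 1)\}$. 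Therefore a general fiber of $\Phi$ contains the $15$‑dimensional orbit $G_1\cdot(A_1,\dots,A_4)$, and the bound follows.

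The crux is the opposite inequality: a general fiber of $\Phi$ is \emph{no} larger than that orbit. I would settle this by an infinitesimal computation at one explicit general tuple $A=(A_1,\dots,A_4)$. Setting $M(x)=\sum_i x_iA_i$ and, for a tangent vector $(B_i)$, $N(x)=\sum_i x_iB_i$, Jacobi's formula identifies $\ker d\Phi_A$ with the linear space of tuples $(B_i)$ for which $\mathrm{tr}\!\bigl(\mathrm{adj}(M(x))\,N(x)\bigr)$ is the zero quartic. This kernel contains the subspace $\{(X^{\top}A_i+A_iX)_i:\mathrm{tr}(X)=0\}$, which for general $A$ is $15$‑dimensional and is precisely the tangent space to the orbit above. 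So it suffices to exhibit one $A$ for which this kernel has dimension exactly $15$, equivalently for which the linear map $(B_i)\mapsto \mathrm{tr}\!\bigl(\mathrm{adj}(M(x))\,N(x)\bigr)\in\CC[x_1,x_2,x_3,x_4]_4$ has rank $25$ — a finite linear‑algebra verification, which one can run at a random rational tuple (or a generic diagonal‑plus‑perturbation tuple). By semicontinuity of the rank of $d\Phi$ this then holds at the general tuple, so a general fiber of $\Phi$ is $15$‑dimensional and $\mathcal{QS}$ has codimension $10$. As an alternative for this step, one may instead invoke the classical fact that a general quartic symmetroid carries only finitely many symmetric determinantal representations modulo the $GL(4,\CC)$‑action — equivalently, that its ten nodes determine the representation up to finite ambiguity.

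I expect this last step to be the only real obstacle: ruling out "extra", non‑obvious symmetric determinantal representations of a general quartic symmetroid, or, in the infinitesimal formulation, extra elements of $\ker d\Phi_A$ beyond the $\mathfrak{sl}(4,\CC)$‑directions. The remaining ingredients — irreducibility via the parametrization, the cone structure, and the identification of the $15$‑dimensional symmetry group with finite generic stabilizer — are routine.
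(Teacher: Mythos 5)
Your proposal is correct and follows essentially the same route as the paper: parametrize $\mathcal{QS}$ by the $40$ matrix entries, deduce irreducibility from the parametrization, and pin down the dimension by showing the Jacobian of $(A_1,\dots,A_4)\mapsto\det(\sum x_iA_i)$ has generic rank $25$. Your only real addition is the explicit use of the $15$-dimensional subgroup $\{g\in GL(4,\CC):\det g=\pm1\}$ acting with generically finite stabilizer to supply the bound $\dim\mathcal{QS}\le 24$ conceptually; the paper instead just reports the numerical rank $25$ at a random tuple and cites Jessop for the theoretical argument, so your version is a somewhat more self-contained rendering of the same computation rather than a different proof.
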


\begin{proof}
Each of the four symmetric matrices $A_i$ has $10$ free parameters.
The formula (\ref{eq:symmetroid})
expresses the $35$ coefficients of $F$ as
quartic polynomials in the $40$ parameters, and hence defines a
rational map $\PP^{39} \dashrightarrow \PP^{34}$.
Our  variety $\mathcal{QS}$ is the Zariski closure
of the image of this map, and so it is irreducible.
To compute its dimension, we form the
$35 \times 40$ Jacobian matrix of the parametrization.
By evaluating at a generic point $(A_1,\ldots,A_4)$,
we find that the Jacobian matrix has rank $25$. Hence the dimension of
the symmetroid variety $\mathcal{QS} \subset \PP^{34}$ is $24$.
For a theoretical argument see \cite[page 168, Chapter IX.101]{Jes}.
\end{proof}

A general complex symmetroid $S$ has $10$ nodes,
but not every $10$-nodal quartic in $\PP^3$ is a symmetroid.
To identify symmetroids, we employ the following lemma
 from Jessop's classical treatise \cite{Jes} on singular quartic surfaces.
 Let $S$ be a $10$-nodal quartic  with a node at $p=(0 : 0 : 0 : 1)$.
 Its defining polynomial equals
      $F=f x_{4}^{2} + 2gx_{4}+h$ where $f,g,h \in \CC[x_1,x_2,x_3]$ are
   homogeneous of degrees 2,3,4 respectively. The projection of $S$
   from $p$ is a double cover of the plane with coordinates
   $x_{1},x_{2},x_{3}$ ramified along the sextic curve $C_{p}$
   defined by $g_{}^{2}-f_{}h_{}$.   The curve $C_{p}$ has nodes
   exactly at the image of the nodes on $S$ that are distinct from $p$.
   Since no three nodes on $S$ are collinear, the curve $C_{p}$ has $9$
   nodes in $\PP^2$.
   The following result appears on page 14 in Chapter I.8 of \cite{Jes}.


   \begin{lemma}\label{Lemma:sym} If the sextic
   ramification curve $C_{p}$ is the union of two smooth
       cubics that intersect in $9$ distinct points, then
       the quartic surface $S$
       is a symmetroid and, moreover, the
       ramification curve $C_{q}$ for
       the projection from any node $q$ on $S$ is the union of two  smooth cubic curves.
       \end{lemma}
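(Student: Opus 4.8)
The plan is to pass to the minimal resolution $\pi_0\colon X \to S$, a K3 surface since the ten nodes of $S$ are rational double points, and to detect the symmetroid property through the intersection form on ${\rm Pic}(X)$. Write $N_1,\dots,N_{10}$ for the $(-2)$-curves contracted by $\pi_0$, with $N_{10}$ over the distinguished node $p$, and let $H \in {\rm Pic}(X)$ be the pullback of the hyperplane class, so $H^2 = 4$ and $H\cdot N_i = 0$ for all $i$. First I would record the double-plane structure coming from the projection of $S$ away from $p$: it lifts to a degree-two morphism $\pi\colon X \to \PP^2$ branched along the sextic $C_p = \{g^2 - fh = 0\}$; the curve $N_{10}$ maps isomorphically onto the tangent-cone conic $\{f = 0\}$; and, with $\ell := \pi^*\mathcal{O}_{\PP^2}(1)$, one has $\ell^2 = 2$, $\ell\cdot N_j = 0$ for $j \le 9$, $\ell\cdot N_{10} = 2$, hence $H = \ell + N_{10}$. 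These identities come from the local analysis of the projection at $p$ and at the nine nodes of $C_p$ (the images of the nodes of $S$ other than $p$); they use the standing hypotheses that those nine images are distinct and that $\{f = 0\}$ avoids them.

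The decisive input from the literature is the lattice characterization of symmetroids: a nodal quartic $S$ is a symmetroid if and only if $\tfrac12(H + N_1 + \dots + N_{10}) \in {\rm Pic}(X)$, equivalently $3H - N_1 - \dots - N_{10} \in 2\,{\rm Pic}(X)$. (The ``only if'' is the computation of the twist of the cokernel sheaf of a symmetric $4\times 4$ matrix of linear forms, using that all ten nodes lie in the rank-two locus; the ``if'' produces such a matrix from a class $D$ with $2D \sim 3H - \sum N_i$, and is essentially Jessop's classical construction \cite[Ch.~I.8]{Jes}.) Granting this, the first assertion follows quickly: since $C_p = C_1 \cup C_2$ with $C_i = \{c_i = 0\}$ smooth cubics, the reduced ramification curve $R_1 \subset X$ over $C_1$ satisfies $3\ell = \pi^* C_1 = 2R_1 + \sum_{j=1}^{9} N_j$ as divisor classes, because $C_1$ meets each of the nine nodes of $C_p$ transversally and passes through no tangent-cone direction; hence $3\ell - \sum_{j\le 9} N_j \in 2\,{\rm Pic}(X)$. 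Using $H = \ell + N_{10}$, one then computes in ${\rm Pic}(X)/2\,{\rm Pic}(X)$ that
\[
H + \sum_{i=1}^{10} N_i \;=\; \ell + \sum_{j=1}^{9} N_j + 2N_{10} \;\equiv\; \ell + \sum_{j=1}^{9} N_j \;\equiv\; 3\ell - \sum_{j=1}^{9} N_j \;\equiv\; 0,
\]
so $S$ is a symmetroid.

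For the ``moreover'' part I would use that the criterion $H + \sum N_i \in 2\,{\rm Pic}(X)$ is intrinsic to the triple $(X, H, \{N_i\})$ and singles out no node. Given an arbitrary node $q$, relabel so $q$ corresponds to $N_1$; the construction of the first paragraph, applied to the projection from $q$, yields $\pi_q\colon X \to \PP^2$ of degree two, branched over a sextic $C_q$, with $H = \ell_q + N_1$ where $\ell_q := H - N_1 = \pi_q^*\mathcal{O}_{\PP^2}(1)$ is globally generated, $\ell_q^2 = 2$, $\ell_q\cdot N_1 = 2$, and $\ell_q\cdot N_i = 0$ for $i \ge 2$. Then
\[
3\ell_q - \sum_{i=2}^{10} N_i \;=\; 3H - 2N_1 - \sum_{i=1}^{10} N_i \;\equiv\; H + \sum_{i=1}^{10} N_i \;\equiv\; 0 \pmod{2\,{\rm Pic}(X)},
\]
so $R := \tfrac12\bigl(3\ell_q - \sum_{i\ge 2} N_i\bigr)$ is a genuine class in ${\rm Pic}(X)$, with $R^2 = 0$ and $R\cdot\ell_q = 3$. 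For general $S$ in our family the nef class $R$ is represented by an irreducible curve, necessarily a smooth elliptic curve mapping isomorphically onto a plane cubic $C_1^{(q)} := \pi_{q*}R$; since $\pi_q^*C_1^{(q)} = 3\ell_q = 2R + \sum_{i\ge 2} N_i$ is non-reduced, $C_1^{(q)}$ is a component of the branch sextic $C_q$, and the residual cubic $C_2^{(q)} := C_q - C_1^{(q)}$ is the other; the two cubics are smooth and meet in the nine distinct images of the nodes of $S$ other than $q$.

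The principal obstacle is the lattice characterization of symmetroids invoked in the second paragraph, and specifically its ``if'' direction: manufacturing an honest symmetric $4\times 4$ matrix of linear forms out of a divisor class $D$ with $2D \sim 3H - \sum N_i$ requires showing that the corresponding reflexive, self-dual-up-to-twist rank-one sheaf on $S$ admits a symmetric \emph{linear} presentation---i.e.\ that a suitable twist is $0$-regular, so that the associated Horrocks/Beilinson monad linearizes. Jessop achieves this by an explicit projective-geometric construction via the net of quadrics on $\PP^3$ determined by the $2\times 2$ minors of the matrix; carrying that out in detail, or substituting a sheaf-theoretic argument, is where the real work lies. A secondary point is the verification, for the $10$-nodal quartics relevant to Theorem~\ref{thm:third}, of the genericity used above---distinct nodes, no three collinear, $\{f = 0\}$ disjoint from the remaining node-images, and irreducibility of the class $R$---but these are all open conditions met by the extreme non-negative forms in question.
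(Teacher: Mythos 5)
The paper does not prove this lemma: it is attributed verbatim to page~14, Chapter~I.8 of Jessop's treatise \cite{Jes} and used as an external input, so there is no in-paper argument for you to match.

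Your lattice-theoretic reconstruction is a legitimate modern route, and your bookkeeping on the Picard lattice of the minimal resolution $X$ is correct: the identities $H = \ell + N_{10}$, $\ell^2 = 2$, $\ell\cdot N_{10} = 2$, and the divisor relation $3\ell = 2R_1 + \sum_{j\le 9} N_j$ obtained by pulling back one of the cubic components through the nine nodes of $C_p$ all check out, and they do imply $H + \sum_{i=1}^{10} N_i \equiv 0 \pmod{2\,{\rm Pic}(X)}$. The observation that this parity condition is intrinsic to $(X,H,\{N_i\})$ and therefore yields the ``moreover'' uniformly over all nodes is also the right idea. However, you are candid that the load-bearing step is the lattice characterization of symmetroids, and in particular its ``if'' direction: manufacturing a symmetric $4\times 4$ linear determinantal representation from a class $D$ with $2D \sim 3H - \sum N_i$. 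That is not a routine verification; it is essentially Jessop's (or Cayley's) theorem restated in modern terms, so your argument reduces the cited lemma to a different, equally classical, result rather than truly bypassing it. As written, the proposal is a reformulation of what must be proved rather than a complete proof. Two secondary issues worth tightening: (i) in the ``moreover'' part you only establish that $R = \tfrac12(3\ell_q - \sum_{i\ge2}N_i)$ is an effective isotropic class and invoke irreducibility ``for general $S$,'' whereas the lemma is stated for any $S$ satisfying the hypothesis --- Riemann--Roch gives $R$ effective unconditionally, but you should argue directly (e.g., via the double cover structure and the fact that $\pi_q^{-1}(\pi_{q*}R) = 2R + \sum_{i\ge2}N_i$ as effective divisors, not merely as classes) that the branch sextic $C_q$ splits as a sum of two cubics; and (ii) the passage from the class identity $\pi_q^*C_1^{(q)} = 2R + \sum_{i\ge 2}N_i$ to the conclusion ``$C_1^{(q)}$ is a component of the branch locus'' needs the equality at the level of effective divisors, which requires a short additional argument (push-pull for the involution of the double cover, together with the fact that the $N_i$ are contracted).
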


\smallskip

\begin{proof}[Proof of Theorem \ref{thm:third}]
Let $\mathcal{E}$ denote the semialgebraic set of all non-negative extreme forms $F$
in $ P_{4,4} \backslash \Sigma_{4,4}$.
Each $F \in \mathcal{E}$ satisfies $|\mathcal{V}_\RR(F)| = 10$, by Proposition \ref{thm CLR}
and the same argument as in the first paragraph in the
proof of Theorem \ref{thm:second}.
Thus $\mathcal{E}$ consists of those real
quartic surfaces in $\PP^3$ that have
precisely $10$ real points.

We shall prove that $\mathcal{E}$ is a subset of $\mathcal{QS}$.
Let $F \in \mathcal{E}$ and $S = \mathcal{V}_\CC(F)$ the corresponding complex surface.
Then $S$ is a real quartic with $10$ nodes, and these nodes are real.
Our goal is to show that $S$ is a symmetroid over $\CC$. If $p \in \PP^3_\RR$ is one of the
   nodes of $F$, then the ramification curve $C_{p}$ is a real sextic curve with
   $9$ real nodes at the image of the nodes distinct from $p$.
  Since the nodes on $S$ are the only real points,  these nodes are the only real points on $C_{p}$.

       Through any nine of the nodes of $S$
   there is a real quadratic surface. This quadric is unique;
   otherwise there is a real quadric through all ten nodes    and $F$ is not extreme.
      Let $q$ be a node on $S$ distinct from $p$ and $A$ a real
   quadratic form vanishing on all nodes on $S$ except $q$.
   Consider the pencil of quartic forms
   \[
   F_{t}\,\, = \,\,F+tA^{2}\quad  \hbox{for} \,\, t\in \RR.
   \]
   Suppose $p=(0:0:0:1)$.
   The polynomial $A$ has the form $ u x_4 +v$,
   where $u, v \in \RR[x_1,x_2,x_3]$ have degree $1$ and $2$.
      The equation of $F_t$ is then given by
    \[
   F_t \,\,\, = \,\,\, (f+tu^2) x_4^2\,\, +\,\,2 (g+tuv)  x_4 \,\,+\,\,h+tv^2.
   \]
      Any surface $S_{t}=\{F_{t}=0\}$ has at least $9$ real singular points, namely the
   nodes of $S$ other than $q$.  Since $F$ is non-negative, $F_{t}$ is
    non-negative for $t>0$ with zeros precisely at the $9$ nodes.  On
    the other hand, $F$ has an additional zero at $q$.  Since $A^{2}$
    is positive at $q$, the real surface $\{F_{t}=0\}$ must have a
    $2$-dimensional component when $t<0$.
    Projecting from $p$ we get a pencil of ramification loci
    $C_{p}(t)$.  In the above notation, this family of sextic curves is defined by the forms
    $$ \qquad    G_t \,\,\,= \,\,\, fh- g^2 + t ( h u^2 - 2  g u v +  f v^2) \quad \in \,\, \RR[x_1,x_2,x_3]_6.   $$
    The curves in this pencil have common  nodes at eight real points
    $p_{1},\ldots,p_{8}$ in the plane $\PP^2$, namely the images of the nodes on $S$ other than $p$ and $q$.

Consider the vector space $V$ of real sextic forms that are singular at
$p_{1},\ldots,p_{8}$. Since each $p_i$ imposes $3$ linear conditions,
we have $\dim V \geq 28-3\cdot 8 = 4$. We claim that $\dim V = 4$.
To see this,  consider a general curve $C_p(t)$ with $t>0$. It has only eight
real points, so as a complex curve it is irreducible and smooth outside the eight nodes.
Hence the geometric genus of $C_p(t)$ is 2.
Let $X$ denote the blow-up of
the plane in the points $p_{1},\ldots,p_{8}$, and denote by $C$ the strict
transform of $C_p(t)$ on $X$.
By  Riemann-Roch, $\dim H^0(\mathcal{O}_{X}(C)\big|_C )=3$, since
$C^{2}=4$.
Combined with the cohomology of the exact sequence
\[
0 \to \mathcal{O}_{X} \to \mathcal{O}_{X}(C) \to \mathcal{O}_{X}(C)\big|_C  \to 0,
\]
we conclude that $\dim V=\dim H^0(X,\mathcal{O}_{X}(C) )\le 4$, and hence $\dim V = 4$.

    The pencil $ \RR\{k_{1},k_{2}\}$ of real cubic forms through the eight
    points $p_{1},\ldots,p_{8}$ determine a $3$-dimensional subspace
    $U= \RR\{k_{1}^{2},k_{1}k_{2},k_{2}^{2}\} $ of $ V$, while the sextic forms
    $G_t$ span a $2$-dimensional subspace     $L$ of $ V$.
      Since $G_t$ has no real zeros except the nodes when $t>0$,
   we see that $L$ is not contained in $U$.  Hence $L$ and $U$ intersect
   in a $1$-dimensional subspace of $V$, so there exists a unique value $t_0 \in \RR$ such that
   $C_{p}(t_{0})=K_1\cdot K_2$, where $K_1, K_2 \in \CC[x_1,x_2,x_3]_3$.

We now have   two possibilities: either $K_1$ and
   $K_2$ are both real, or  $K_1$ and $K_2$ are complex conjugates.
   We claim that the latter is the case.
      Consider the intersection $\{G_t=0\} \cap \{K_1\cdot K_2=0\}$. This scheme
      is the union of a scheme of length $32$ supported on the $8$ nodes and a scheme $Z$ of
   length $6\cdot 6-4\cdot 8=4$. Its defining ideal
       $\langle fh-g^2 ,h u^2 -  2g u v +  f v^2 \rangle$ contains the square
      $(gu-2fv)^2$, and thus each of its points has even length.
       Hence $Z$ is either one point of length $4$ or two
   points of length $2$. Since the general $G_t$ does not contain the ninth
   intersection point of $K_1$ and $K_2$, each component of $Z$ is contained in only
   one of the $K_i$. In particular, since $K_i \cap C_p(t)$ contains
   a scheme of length $2$ disjoint from the points $p_{1},\ldots,p_{8}$,
   this shows that $Z$ has two points, one in each of the $K_i$.
   If both $Z_i$ were real then   $K_i \cap Z$ would be real,
   contradicting the fact that $G_t$ has only $8$ real points.
   We conclude that the two cubics
      $K_1,K_2$ are complex conjugates and their only real points
   are the $9$ common intersection points.

   We now claim that $t_0 = 0$. Indeed, if $t_0 < 0$ then
   $S_{t_0}$ has $2$-dimensional real components and
   the real points in the ramification locus $  C_p(t_0) $ would have dimension $1$.
   If $t_0 > 0$ then $S_{t_0}$ has only $9$ real points
   and  $C_p(t_0)$ has only $8$ real points.
   Since $C_p(t_0) = K_1 \cdot K_2$ has $9$ real zeros, it follows that $t_0 = 0$.
   Using Jessop's   Lemma \ref{Lemma:sym}, we now conclude that $F=F_{0}$ is a symmetroid.

We have shown that the semi-algebraic set $\mathcal{E}$ is contained
in the symmetroid variety $\mathcal{QS}$. It remains to be proved that
$\mathcal{E}$ is Zariski dense in $\mathcal{QS}$.
  To see this, we start with any particular extreme quartic.
  For instance, take the following extreme quartic due to Choi, Lam and Reznick
    \cite[Proof of Proposition~4.13]{CLR}:
    \begin{equation}
    \label{eq:CLRquartic} \,\,
    F_{b}\,\,=\,\,\sum_{i,j}x_{i}^{2}x_{j}^{2}\,+\,b\sum_{i,j,k}x_{i}^{2}x_{j}x_{k}\,+ \,(4b^{2}{-}4b{-}2)x_{1}x_{2}x_{3}x_{4}
    \quad \hbox{for} \,\,\, 1 < b < 2,
   \end{equation}
where the sums are taken over all distinct pairs and triples of indices. 
The complex surface defined by $F_b$ has $10$ nodes,
   namely, the points in $\mathcal{V}_\RR(F_b)$.
   Our proof above shows that $F_b$ is a symmetroid.
   Since the Hessian of $F_b$ is positive definite at each of the $10$ real points,
     we   can now perturb these freely in a small
        open neighborhood inside the variety of $10$-tuples
of real points  that are nodes of a symmetroid. Each corresponding
quartic is real, non-negative and extreme. This adaptation of
``Hilbert's method'' constructs
a semi-algebraic family of dimension $24$ in $\mathcal{E}$.
We conclude that
 $\mathcal{QS}$ is the Zariski closure of~$\mathcal{E}$.
\end{proof}

Our proof raises the question whether Lemma \ref{Lemma:sym}
can be turned into an algorithm. To be precise, given an
extreme quartic, such as (\ref{eq:CLRquartic}), what is a practical method for
computing a complex symmetric determinantal representation (\ref{eq:symmetroid})?
We shall address this question in the second half of the next section.

\section{Numerical Algebraic Geometry}

We verified the results of Theorems~\ref{thm:first} and~\ref{thm:fourth}
using the  algorithms  implemented in {\tt Bertini} \cite{Ber}. In what follows
we shall explain our methodology and findings.
An introduction to numerical algebraic
geometry  can be found in \cite{SW}.

The main computational method used in {\tt Bertini} is homotopy continuation.
Given a polynomial system $F$ with the same number of variables
and equations, {\em basic homotopy continuation}
computes a finite set $\sS$ of complex roots of $F$ which contains
the set of isolated roots.  By ``computes $\sS$''  we mean
a numerical approximation of each point in $\sS$ together
with an algorithm for computing each point
in $\sS$ to arbitrary accuracy.  The basic idea is to consider a parameterized
family $\sF$ of polynomial systems which contains $F$.  One first computes
the isolated roots of a sufficiently general member of $\sF$, say $G$,
and then tracks the solution paths starting with the isolated roots of $G$ at $t = 1$
of the homotopy
\[
H(x,t) \,\,= \,\, F(x)(1-t) + tG(x).
\]
The solution paths are tracked numerically using predictor-corrector methods.
For enhanced numerical reliability, the
adaptive step size and adaptive precision path tracking methods of \cite{AMP2} is used.
The endpoints at $t = 0$ of these paths can be computed to arbitrary accuracy using endgames
with the set of finite endpoints being the set $\sS$.
If $F$ has finitely many roots, then $\sS$ is the set of all roots of $F$.
If the variety of $F$ is not zero-dimensional,
 then the set of isolated roots of $F$
is obtained from $\sS$ using the local dimension test of \cite{BHPS}.

Our computations to numerically verify the degrees in
Theorem~\ref{thm:first} only used basic homotopy continuation.
For the $\Sigma_{3,6}$  case, we computed the intersection of the set of rank
three quadrics in cubics with a random line in the space $\PP^{27}$ of ternary sextics.
In particular, for random $p,q \in \CC[x_0,x_1,x_2]_6$,
we computed the complex values of $s$ such that there exists $f,g,h \in \CC[x_0,x_1,x_2]_3$ with
\[
f h - g^2  \,\,= \,\, p + s q.
\]
We used the two degrees of freedom in the parametrization of a rank three
quadric in cubics by taking the
coefficient of $x_0^3$ in $g$ and $x_0^2 x_1$ in $f$ to be zero, and we
 dehomogenized by taking the coefficient of $x_0^3$ in $f$ to be one.
The resulting system $F = 0$
consists of $26$ quadratic and two linear equations in $28$ variables.
Since the solution set of $F = 0$ is invariant under the action of negating $g$,
we considered $F$ as a member of the family $\sF$
of all polynomial systems in $28$ variables consisting of two linear
and $26$ quadratic polynomials which are invariant under this action.
It is easy to verify that a general member of $\sF$ has $2^{26}$
roots, which consist of $2^{25}$ orbits of order 2 under the action of negating $g$.
We took the system $G$ to be a dense linear product polynomial system \cite{LinProd}
with random coefficients which respected this action.
By tracking one path from each of the $2^{25}$ orbits,
which took about 40 hours using 80 processors,
this yielded $166400$ points which correspond to $83200$ distinct values of $s$.

The $\Sigma_{4,4}$ case of Theorem~\ref{thm:first} was solved similarly,
and the number $38475$ was verified.
We took advantage of
 the bi-homogeneous structure of the system
\[ \qquad \qquad
f g - h k \,\,=\,\, p + s q.
\]

Numerical algebraic geometry can be used to compute all irreducible
components of a complex algebraic variety. Here the methods combine
the ability to compute isolated solutions with the use of
random hyperplane sections.  Each irreducible component
$V$ of $F = 0$ is represented by a witness set
which is a triple $(F,\sL,W)$ where $\sL$ is a system of $\dim V$ random linear polynomials
and $W$ is the finite set consisting of the points of intersection of $V$ with $\sL = 0$.

Briefly, the basic approach to compute a witness set for the
irreducible components of $F = 0$ of dimension $k$ is to first compute
the isolated solutions $W$ of $F = \sL_k = 0$ where $\sL_k$ is a system of $k$
random linear polynomials.  The set $W$ is then partitioned into sets,
each of which corresponds to the intersection of $\sL_k = 0$ with an irreducible
component of $F = 0$ of dimension $k$.  The cascade \cite{SV} and regenerative
cascade \cite{HSW} algorithms use a sequence of homotopies to
compute the isolated solutions of $F = \sL_k = 0$ for all relevant
values of $k$.

We applied these techniques to verify the results of Theorem~\ref{thm:fourth}
concerning our $10\times 10$ Hankel matrices.
Our computations combined the regenerative cascade algorithm
with the {\em numerical rank-deficiency method} of \cite{BHPS2}.
In short, if $A(x)$ is an $n\times N$ matrix with polynomial entries, consider the polynomial system
\[
  F_r \,\,= \,\, A(x) \cdot B \cdot \left[\begin{array}{c} I_{N-r} \\ \Xi \end{array}\right]
\]
where $B\in\CC^{N\times N}$ is random, $I_{N-r}$ is the $(N-r)\times (N-r)$ identity matrix, and
$\Xi$ is an $r\times (N-r)$ matrix of unknowns.
One computes the irreducible components of $F_r = 0$
whose general fiber under the projection $(x,\Xi)\mapsto x$
is zero-dimensional. The images of these components are the
components of
\[
\sS_r(A) \,\,= \,\, \{\,x~:~\rank \,A(x) \leq r\}.
\]
The degree of such degeneracy loci is then
   computed using the method of \cite{HS}.

The results on degree and codimension in Theorem~\ref{thm:fourth}
were thus verified, with the workhorse being the regenerative cascade algorithm.
For instance, we ran {\tt Bertini} for $12$ hours on $80$ processors
to find that the variety of Hankel matrices
(\ref{eq:Hankel33}) of rank $\leq 7$ is indeed irreducible
of dimension $21$ and degree $2640$.

\smallskip

We now shift gears and discuss the problem that arose
at the end of Section 4, namely, how to compute a symmetric determinantal representation
(\ref{eq:symmetroid}) for a given extremal quartic $F \in \mathcal{E} \subset \partial P_{4,4} \backslash \Sigma_{4,4}$.
For a concrete example let us consider the Choi-Lam-Reznick quartic in \eqref{eq:CLRquartic}
with $b = 3/2$.
We found that $\,F_{3/2}  =  {\rm det}(M) / \gamma$, where
 $\gamma = -54874315598400(735 \omega + 2201)$, with
$\omega=\frac27\sqrt{-10}$,
 and $M$ is the symmetric matrix with~entries

\begin{equation}\label{eq:symbDet} {\begin{matrix}
m_{11} &  = &        (-11844 \omega+8100) x_1+(3024 \omega+13140) x_3 , \\
m_{12} &  = &       (7980 \omega+14820) x_3  , \\
m_{13} &  = &       (19971 \omega-17460) x_1+(4494 \omega+9600) x_3 , \\
m_{14} &  = &       (-1596 \omega-26790) x_3+(15561 \omega-6840) x_4 , \\
m_{22} &  = &       (30324 \omega-7220) x_2+(20216 \omega+21660) x_3 , \\
m_{23} &  = &        (20216 \omega+21660) x_2+(6384 \omega+27740) x_3 , \\
m_{24} &  = &       (-20216 \omega-21660) x_2-39710 x_3+(7581 \omega-21660) x_4, \\
m_{33} &  = &       (-13230 \omega+31860) x_1+39710 x_2+(-28910 \omega+29910) x_3, \\
m_{34} &  = &       -39710 x_2+(25004 \omega-17100) x_3+((5187/2) \omega-1140) x_4 ,\\
m_{44} &  = &       39710 x_2+(-20216 \omega+37905) x_3+(-30324 \omega+27075) x_4.
\end{matrix}}\end{equation}

A naive approach to obtaining such representations
is to extend the numerical techniques introduced
for quartic curves in \cite[\S 2]{PSV}: after changing coordinates so that
$x_1^4$ appears with coefficient $1$ in $F$, one assumes
that $A_1$ is the identity matrix, $A_2$ an unknown
diagonal matrix, and $A_3$ and $A_4$ arbitrary
symmetric $4 {\times} 4$-matrices with unknown entries.
The total number of unknowns is
$4 {+} 10 {+} 10  = 24$, so it matches the dimension of
the symmetroid variety $\mathcal{QS}$.
With this, the identity (\ref{eq:symmetroid}) translates
into a system of $34$ polynomial equations in
$24$ unknowns.
Solving these equations directly using {\tt Bertini} is currently not possible.
Since the system is overdetermined, {\tt Bertini} actually
uses a random subsystem which has a total degree of $3^6 4^{15}$.  The randomization destroys much of the
underlying structure and solving this system is currently infeasible.

In what follows, we outline a better algorithm based on the underlying geometry of the problem.
The input is a $10$-nodal quartic surface $S=\{F=0\}$.
After changing  coordinates, so that $p=(0{:}0{:}0{:}1)$ is one of 
the nodes, the quartic has the form:
$$ F \,\,= \,\, f x_4^2+2g x_4+h \qquad
 \hbox{where} \,\,\,f,g,h \in \mathbb{R}[x_1,x_2,x_3].
$$
The projection from $p$ defines a double cover $\pi:S\to 
\mathbb{P}^2$ and the ramification locus is the sextic
  curve whose defining polynomial is $fh-g^{2}$ and splits into a product of two
  complex conjugate cubic forms $K_1, K_2$.
  The intersection of $S$ with $\{K_1=0\}$, regarded as a cubic cone in $\PP^3$,
     is supported on the branch locus of the double cover
      and therefore equals two times a curve $C$ of degree 6. The curve $C$ has a triple point at the vertex $p$,
       its arithmetic genus is $ 3$, and it is arithmetically Cohen-Macaulay.
By the Hilbert-Burch Theorem,
the ideal of $C$ is generated by the $3\times 3$-minors $g_1,\ldots, g_4$ of a $3\times 4$ matrix
whose entries are linear forms in $\mathbb{C}[x_1,x_2,x_3,x_4]_1$:
\begin{equation}\label{eq:HilbertBurch}
\begin{bmatrix}
l_{11} & l_{12} & l_{13} & l_{14}  \\
l_{21} & l_{22} & l_{23} & l_{24}  \\
l_{31} & l_{32} & l_{33} & l_{34}
\end{bmatrix}
\end{equation}
The rows of this matrix give
  three linear syzygies between the four cubics $g_i$. Furthermore, $F$ itself is in the ideal generated by these cubics, and so there is a linear relation $F=l_1g_1+\cdots+l_4g_4$.
  Hence the quartic $F$ is equal, up to multiplication by a non-zero scalar in $\mathbb{C}$,
  to the determinant of the matrix
\[ L \quad = \quad
\begin{bmatrix}
l_{1} & -l_{2} & l_{3} & -l_{4}  \\
l_{11} & l_{12} & l_{13} & l_{14}  \\
l_{21} & l_{22} & l_{23} & l_{24}  \\
l_{31} & l_{32} & l_{33} & l_{34}
\end{bmatrix}.
\]
To find a \emph{symmetric} matrix $M$
with the same property, we solve the linear system $PL = (PL)^T$
for some matrix $P \in {\rm GL}(4,\CC)$ and define $M = PL$.

A numerical version of the above algorithm is almost exactly as explained above
except that a basis for the ideal $I_C$ of the genus $3$ curve $C$
is found by computing a large sample of points in the intersection
$\{F=K_1=0\}$, and then computing a basis $g_1,\ldots,g_4$ for the $4$-dimensional 
space of cubic forms vanishing on this set.
Next, a basis for the $3$-dimensional set of linear 
syzygies between these cubics is computed. This
yields the matrix in \eqref{eq:HilbertBurch} whose $3\times 3$ minors 
are the four cubics $g_i$.
For the quartic \eqref{eq:CLRquartic} with $b = 3/2$, we used {\tt Bertini} to compute 100 random
points in this intersection and then used standard numerical linear algebra algorithms.
In total, it took $30$ seconds to compute a symmetric determinantal representation for $F_{3/2}$.
To four digits, with  $i = \sqrt{-1}$, the output we found is
 the symmetric matrix $M$ with entries

\begin{equation}\label{eq:numDet}{\scalefont{.54} \begin{matrix}
m_{11} &=& (15.5378 +  5.6547i)x_1 - (20.4008 -  5.8116i)x_2 - (23.1956 + 16.9236i)x_3 + (12.4987 + 26.8206i)x_4 , \\
m_{12} &=& (18.3458 -  5.8125i)x_1 - (14.0867 - 25.1505i)x_2 - (35.0029 -  5.2948i)x_3 + (36.1417 + 15.7167i)x_4 , \\
m_{13} &=& (11.6232 +  5.6624i)x_1 - (15.6076 -  5.9393i)x_2 - (17.3057 + 12.3685i)x_3 + (11.0079 + 22.8305i)x_4 , \\
m_{14} &=& (25.7222 +  1.2098i)x_1 - (27.4233 - 22.3864i)x_2 - (45.8046 + 14.1068i)x_3 + (35.3836 + 37.8454i)x_4 , \\
m_{22} &=& (12.6315 - 18.4638i)x_1 + ( 9.6932 + 37.6953i)x_2 - (26.0269 - 34.9909i)x_3 + (49.9098 - 16.2993i)x_4 , \\
m_{23} &=& (14.6285 -  3.0705i)x_1 - ( 9.5983 - 20.6203i)x_2 - (25.8489 -  4.2265i)x_3 + (31.1616 + 13.0794i)x_4 , \\
m_{24} &=& (24.1544 - 17.3589i)x_1 - ( 5.2755 - 47.6528i)x_2 - (52.3363 - 27.5281i)x_3 + (68.7313 +  6.4353i)x_4 , \\
m_{33} &=& (~8.5030 +  5.3275i)x_1 - (11.9127 -  5.6822i)x_2 - (12.9473 +  8.9555i)x_3 + (9.6646 +  19.4288i)x_4 , \\
m_{34} &=& (19.6130 +  2.9165i)x_1 - (20.0754 - 19.3371i)x_2 - (34.2042 +  9.9911i)x_3 + (30.7454 + 32.0581i)x_4 , \\
m_{44} &=& (37.6831 - 10.7034i)x_1 - (27.3051 - 52.2852i)x_2 - (80.4558 -  2.6947i)x_3 + (79.5452 + 43.7001i)x_4 .
\end{matrix}}\end{equation}

The symbolic solution \eqref{eq:symbDet} 
and the numerical solution \eqref{eq:numDet} are in the 
same equivalence class of symmetric matrix representations.  In fact,  
we close with the result that the output of the algorithm is 
 essentially unique, independant of the choice of node $p$ and cubic 
 form $K_{i}$:

\begin{proposition} \label{prop:conjugation}
For any $10$-nodal symmetroid $F \in \mathcal{QS}$,
the representation (\ref{eq:symmetroid})
is unique up to the natural action  of $\,{\rm GL}(4,\CC)$ via $A_i \mapsto U A_i U^T$ for $i=1,2,3,4$.
 \end{proposition}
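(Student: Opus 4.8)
The plan is to translate symmetric determinantal representations of $F$, taken modulo the substitution $A_i\mapsto UA_iU^{T}$, into line bundles on a K3 surface, where uniqueness becomes automatic because the Picard group of a K3 surface is torsion free. Write $S=\{F=0\}$ and let $\pi\colon\widetilde S\to S$ be the minimal resolution. Since $F$ is a quartic with only nodes, $\widetilde S$ is a smooth K3 surface, and $\mathrm{Pic}(\widetilde S)$ contains $\widetilde H:=\pi^{*}\mathcal{O}_{S}(1)$ together with the ten exceptional $(-2)$-curves $E_{1},\dots,E_{10}$. To a representation $M(x)=A_{1}x_{1}+\cdots+A_{4}x_{4}$ as in \eqref{eq:symmetroid} I would attach the cokernel $\mathcal{L}=\mathrm{coker}\bigl(\mathcal{O}_{\PP^{3}}(-1)^{4}\xrightarrow{M}\mathcal{O}_{\PP^{3}}^{4}\bigr)$, a rank-one arithmetically Cohen--Macaulay sheaf on $S$; this is essentially the sheaf produced by the Hilbert--Burch construction \eqref{eq:HilbertBurch} of Section~5, and I would check that it is non-locally-free at all ten nodes and simple, i.e.\ $\mathrm{End}(\mathcal{L})=\CC$. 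Because $M=M^{T}$, dualizing the free resolution yields a self-duality $\mathcal{H}om_{S}(\mathcal{L},\mathcal{O}_{S})\cong\mathcal{L}(-3)$, which on $\widetilde S$ becomes $\widetilde{\mathcal{L}}^{\otimes 2}\cong\mathcal{N}$, where $\widetilde{\mathcal{L}}$ is the line bundle on $\widetilde S$ determined by $\mathcal{L}$ and $\mathcal{N}=\mathcal{O}_{\widetilde S}(3\widetilde H-E_{1}-\cdots-E_{10})$. The decisive point is that $\mathcal{N}$ is the \emph{same} for every representation: the free resolution forces $\chi(\mathcal{L}(n))=2n^{2}+6n+4$, hence Riemann--Roch on $\widetilde S$ gives $\widetilde{\mathcal{L}}\cdot\widetilde H=6$ and $\widetilde{\mathcal{L}}^{2}=4$, and this together with the shape $\widetilde{\mathcal{L}}^{\otimes 2}\cong\mathcal{O}_{\widetilde S}(3\widetilde H-\sum_{i\in T}E_{i})$ (with $T$ the non-free locus) forces $|T|=10$; the mere existence of a square root of $\mathcal{N}$ is exactly the condition that $S$ be a symmetroid.

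Granting this dictionary, the uniqueness statement follows quickly. If $M$ and $M'$ are two symmetric determinantal representations of the same quartic $F$, with associated line bundles $\widetilde{\mathcal{L}}$ and $\widetilde{\mathcal{L}}'$, then $\widetilde{\mathcal{L}}^{\otimes 2}\cong\mathcal{N}\cong(\widetilde{\mathcal{L}}')^{\otimes 2}$, so $\widetilde{\mathcal{L}}\otimes(\widetilde{\mathcal{L}}')^{-1}$ is a $2$-torsion class in $\mathrm{Pic}(\widetilde S)$. But the Picard group of a K3 surface is a finitely generated free abelian group, hence has no $2$-torsion, so $\widetilde{\mathcal{L}}\cong\widetilde{\mathcal{L}}'$, and therefore $\mathcal{L}\cong\mathcal{L}'$ on $S$.

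To return to matrices, I would lift an isomorphism $\mathcal{L}\xrightarrow{\sim}\mathcal{L}'$ to a morphism of the two minimal free resolutions. Since $\mathrm{Hom}(\mathcal{O}_{\PP^{3}}^{4},\mathcal{O}_{\PP^{3}}^{4})$ and $\mathrm{Hom}(\mathcal{O}_{\PP^{3}}(-1)^{4},\mathcal{O}_{\PP^{3}}(-1)^{4})$ consist of constant matrices, the lift is realized by $U,V\in\mathrm{GL}(4,\CC)$ with $M'=UMV^{-1}$. Transposing this identity and using $M=M^{T}$, $M'=(M')^{T}$ shows that $W:=V^{T}U$ satisfies $WM=MW^{T}$; this $W$ induces an endomorphism of the simple sheaf $\mathcal{L}$, hence $W=\lambda I$ for some $\lambda\in\CC^{*}$, so $V=\lambda(U^{T})^{-1}$ and $M'=\lambda^{-1}UMU^{T}$. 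Rescaling $U$ absorbs the scalar, giving $M'=U''M(U'')^{T}$ for some $U''\in\mathrm{GL}(4,\CC)$. Thus the representation \eqref{eq:symmetroid} is unique up to the action $A_{i}\mapsto UA_{i}U^{T}$. Since such a representation exists for every $10$-nodal symmetroid (this is what Theorem~\ref{thm:third} and the algorithm of Section~5 provide), the equivalence class is well defined; in particular the algorithm's output is independent of the auxiliary choices of node $p$ and conjugate cubic $K_{i}$.

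The clean core of the argument is the torsion-free Picard step, and the hard part will be the bookkeeping around the nodes in the first paragraph. One must verify that the cokernel sheaf $\mathcal{L}$ is genuinely non-locally-free at all ten nodes and of the local type making $\widetilde{\mathcal{L}}\cdot E_{i}=1$, pin down precisely the exceptional contribution to $\mathcal{N}$ via the duality formula for rank-one reflexive sheaves at $A_{1}$ singularities, and confirm that $\mathcal{N}$ is intrinsic to $S$ — the Riemann--Roch count above is meant to do this uniformly over all of $\mathcal{QS}$, but it relies on having the correspondence $\mathcal{L}\leftrightarrow\widetilde{\mathcal{L}}$ and the vanishing $R^{1}\pi_{*}\widetilde{\mathcal{L}}=0$ under control. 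One must also justify that the free resolution attached to a symmetric representation is minimal and that $\mathcal{L}$ is simple, so that the lifting step is by constant invertible matrices with only a scalar ambiguity; both are standard consequences of the Cohen--Macaulay and cohomology-vanishing properties of determinantal hypersurfaces, as in the smooth treatments of Beauville and Catanese, but they need to be re-checked in the present mildly singular setting.
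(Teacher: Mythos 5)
Your proof is correct and rests on exactly the same decisive idea as the paper's: the determinantal representation gives rise to a line bundle on the resolved K3 surface $\widetilde S$, the symmetry of $M$ pins down its square, and torsion-freeness of $\mathrm{Pic}(\widetilde S)$ forces uniqueness. The difference is one of packaging rather than substance: the paper works with the $4$-dimensional linear system of sextic curves of arithmetic genus $3$ cut out by the $3\times 3$ minors of rows of $M$, observing that their doublings are complete intersections with cubics and then asserting the converse dictionary; you instead formalize the same correspondence through the cokernel sheaf $\mathcal{L}=\mathrm{coker}(M)$, its self-duality $\mathcal{H}om(\mathcal{L},\mathcal{O}_S)\cong\mathcal{L}(-3)$, and a lift of sheaf isomorphisms to the free resolutions, which makes the final step from $\mathcal{L}\cong\mathcal{L}'$ to $M'=UMU^T$ explicit where the paper leaves it implicit. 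Your version is more careful about the technical points (simplicity of $\mathcal{L}$, the exceptional contributions, $R^1\pi_*$ vanishing) that the paper glosses over, and those are indeed the points one would need to verify to make either argument fully rigorous.
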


\begin{proof}
Let $M = \sum x_i A_i$ be a symmetric matrix such that $F={\rm det}( M)$. Any
three of the four rows of $M$ determine a curve $C$ by taking $3\times 3$ minors.
This gives a $4$-dimensional linear system $L_M$ of curves of arithmetic genus $3$ and degree $6$ on
$S$.  The doubling of any curve in $L_M$ is the complete intersection of $S$ and a cubic surface defined by a $3\times3$-symmetric
 submatrix of $M$. Conversely, the linear system determines the matrix $M$ up to a change of basis.

 Each curve in $L_M$ passes through all the nodes of $S$, and
 these are the common zeros of the curves in $L_M$.
 If $\tilde{S}$ is the smooth K3 surface obtained by resolving the nodes, then by Riemann-Roch, 
 $L_M$ defines a \emph{complete} linear system on $\tilde{S}$. Since $\mbox{Pic}(\tilde{S})$ is 
 torsion-free, we see that $L_M$ is uniquely determined as the linear system of degree $6$ curves on $S$ passing through all nodes and whose doubling form a complete intersection. Therefore the equivalence class of the symmetric matrix representation
 is also unique.
\end{proof}


\bigskip

\noindent {\bf Acknowledgments.}
This project was started at the Mittag-Leffler Institute,
Djursholm, Sweden, whose support and
hospitality was enjoyed by all authors. We thank Paul Larsen, Giorgio Ottaviani, 
Rahul Pandhariphande and
Ulf Persson for helpful discussions and comments.
GB, JH and BS
were also supported by the US National Science Foundation.

\medskip

\bigskip
\medskip

\noindent
Grigoriy Blekherman, Georgia Inst.~of Technology, Atlanta, USA,
{\tt grrigg@gmail.com}

\noindent
Jonathan Hauenstein, Texas A\&M, College Station, USA,
{\tt jhauenst@math.tamu.edu}

\noindent
John Christian Ottem, University of Cambridge, England,
{\tt jco28@dpmms.cam.ac.uk}

\noindent
Kristian Ranestad, University of Oslo, Norway,
{\tt ranestad@math.uio.no}

\noindent
Bernd Sturmfels, UC Berkeley, USA,
{\tt bernd@math.berkeley.edu}

\end{document}